\theoremstyle{plain}
\newtheorem{thm}{Theorem}[section]
\newtheorem{prop}[thm]{Proposition}
\newtheorem{lemma}[thm]{Lemma}
\newtheorem{cor}[thm]{Corollary}
\newtheorem{defpr}[thm]{Definition/Proposition}
\theoremstyle{definition}
\newtheorem{defn}[thm]{Definition}
\newtheorem*{defn*}{Definition}
\newtheorem{question*}{Question}
\newtheorem{example}[thm]{Example}
\newtheorem*{example*}{Example}
\newtheorem*{rmk*}{Remark}
\newcommand{\field}[1]{\mathbb{#1}}
\newcommand{\N}{\field{N}}
\newcommand{\ideal}[1]{\mathfrak{#1}}
\newcommand{\m}{\ideal{m}}
\newcommand{\p}{\ideal{p}}
\newcommand{\func}[1]{\mathrm{#1} \,}
\newcommand{\depth}{\func{depth}}
\newcommand{\hgt}{\func{ht}}
\DeclareMathOperator{\Ass}{Ass}
\newcommand{\im}{\func{im}}
\newcommand{\arrow}[1]{\stackrel{#1}{\rightarrow}}
\newcommand{\ra}{\rightarrow}
\DeclareMathOperator{\ann}{ann}
\DeclareMathOperator{\grade}{grade}
\DeclareMathOperator{\Hom}{Hom}
\newcommand{\FF}{\mathcal{F}}
\newcommand{\li}
 {\leftfootline}
\newcommand{\lic}[2]{{#1}^{\li}_{#2}}
\newcommand{\ic}[2]{{#1}^-_{#2}}
\newcommand{\Ri}{{-\rm{Rs}}}
\newcommand{\Ric}[2]{{#1}^\Ri_{#2}}
\newcommand{\EHUi}{{-\rm{EHU}}}
\newcommand{\EHUic}[2]{{#1}^\EHUi_{#2}}
\newcommand{\onto}{\twoheadrightarrow}
\newcommand{\into}{\hookrightarrow}
\newcommand{\tmod}{\tau_{\mathcal{M}}}
\newcommand{\tid}{\tau_{\mathcal{I}}}
\newcommand{\cnd}{\mathfrak{f}}
\newcommand{\cond}[1]{\cnd_{#1}}
\DeclareMathOperator{\Soc}{Soc}
\DeclareMathOperator{\Min}{Min}
\DeclareMathOperator{\Sym}{Sym}
\DeclareMathOperator{\Frac}{Frac}
\DeclareMathOperator{\Supp}{Supp}
\newcommand{\be}{\mathbf{e}}
\newcommand{\vect}[2]{{{#1}_1, \dotsc, {#1}_{#2}}}
\newcommand{\cA}{\mathcal{A}}
\author{Neil Epstein}
\address{Department of Mathematical Sciences \\ George Mason University \\ Fairfax, VA  22030}
\email{nepstei2@gmu.edu}
\author{Bernd Ulrich}
\address{Department of Mathematics\\
Purdue University\\
West Lafayette, Indiana 47907}
\email{ulrich@math.purdue.edu}
\thanks{The second named author was supported in part by a grant from the National Science Foundation.}
\title{Liftable integral closure}
\date{\today}
\begin{document}
\begin{abstract}
We develop the basic properties of an essentially new closure operation on submodules, the \emph{liftable integral closure} of a submodule, including its relationships with the two
prevailing notions of integral closure of submodules.  We show that for a quite general class of local rings, every finite length module may be represented as a quotient of the form $T/L$, where $T$ is torsionless
and integrally dependent on $L$.
\end{abstract}

\maketitle

\section*{Introduction}
The classical notion of integral closure of ideals (developed in the 1940s) became so useful that in 1987, Rees \cite{Rees-redmod-nono} defined integral closure of modules.  Since then, other definitions were proposed, not all of them equivalent, but each having its advantages.  On the other hand, in the words of Huneke and Swanson \cite[in the middle of page 303]{HuSw-book}, ``Every choice of definition has its own problems."

Some definitions (most notably the one in \cite{EHU-Ralg-nono}) place a value on independence of embedding.  That is, if $L \subseteq M \subseteq N$, then the integral closure of $L$ in $M$ should be the intersection of $M$ with the integral closure of $L$ in $N$.  According to this approach also, if $N$, $T$ are ``big enough'' modules containing $L$, then the integral closure of $L$ in $N$ should agree with the integral closure of $L$ in $T$.

In the present work, we take the opposite approach.  That is, if $L \subseteq M$, the `liftable integral closure' of $L$ in $M$, denoted $\lic{L}{M}$, depends \emph{only} on the embedding, in the sense that it only depends on the module structure of the quotient module $M/L$.  As it happens, this approach has multiple surprising applications, not least a theorem about representing finite length modules in terms of integral extensions of torsionless modules.

We wish to stress here that we are not claiming this notion to be the ``right'' definition of the integral closure of a submodule.  We agree, of course, with the Huneke-Swanson quote above.  Instead, the operation explored here should be seen as a different and useful perspective in the theory of integral closure of modules.

The structure of the paper is as follows:  In \S\ref{sec:history}, we discuss some pre-existing notions of integral closure and their interrelationships, paying special attention to the case of not necessarily finitely generated modules.  In \S\ref{sec:properties}, we define the \emph{liftable integral closure} $\lic{L}{M}$ relative to a submodule inclusion $L \subseteq M$ and develop its basic properties (see especially Lemma~\ref{lem:fin} and Proposition~\ref{pr:list}).  In \S\ref{sec:test} we give notions of `test ideals' relative to liftable integral closure, and we show that there are usually only one or two such ideals per ring (denoted $\tid$ and $\tmod$), despite a plethora of definitions (see Definition/Proposition~\ref{pr:test}).  In \S\ref{sec:char}, we characterize these ideals based largely on the dimension of the ring.  For 0-dimensional rings, $\tid$ is the socle (Proposition~\ref{pr:dim0test}); for 1-dimensional Cohen-Macaulay rings, $\tid$ is the conductor (Theorem~\ref{thm:dim1test}), thus generalizing a theorem of J. Vassilev; for equidimensional excellent unmixed rings of dimension at least two, $\tmod=\tid=0$ (Theorem~\ref{thm:hightest}).  Finally, in \S\ref{sec:torsionless}, we provide some surprising consequences of this latest result.  For instance (Theorem~\ref{thm:torless}), given a finite length module $M$ over an excellent equidimensional unmixed local ring of dimension at least two, there exists a finite free module $F$ and submodules $L \subseteq T \subseteq F$ such that $T$ is integral over $L$ in $F$.  This raises the question: what other sorts of finitely generated modules may be represented as a quotient of finite torsionless modules sharing a common integral closure?

\section{Some history of integral closures of modules}\label{sec:history}

Relative to an embedding, there is a commonly held notion of the integral closure of a submodule of a free module, especially in the case where the submodule has a rank.  More generally, we make the following definition:

\begin{defn}[Integral closure in a free module]
Let $L \subseteq F$ be $R$-modules, where $F$ is free.  Let $S$ be the (naturally graded) symmetric algebra over $R$ defined by $F$, and let $T$ be the subring of $S$ induced by the inclusion $L \subseteq F$.  Note that $S$ is $\N$-graded and generated in degree 1 over $R$, and $T$ is an $\N$-graded subring of $S$, also generated over $R$ in degree 1.  Then the \emph{integral closure of $L$ in $F$}, denoted $L^-_F$, is the degree 1 part of the integral closure of the subring $T$ of $S$.
\end{defn}

This definition agrees with all the usual definitions of the integral closure relative to an embedding in any situation where $R$ is Noetherian and $F$ is finitely generated and free.  See \cite[Chapter 16]{HuSw-book} for details.  As for integral closure of a submodule of an arbitrary (finitely generated) module, there seem to be two prevailing notions -- one by Rees \cite{Rees-redmod-nono}, and one by Eisenbud, Huneke, and the second author of this work \cite{EHU-Ralg-nono}.  We give them both below:

\begin{defn}[Rees integral closure]
Let $L \subseteq M$ be modules\footnote{Rees only defined this notion for finitely generated modules, and he required that the valuation rings $V$ be Noetherian.  However, one may employ the usual tricks to show that for finitely generated modules, the definition given here is equivalent to his.} over a Noetherian ring $R$.  First suppose $R$ is a domain, and let $Q$ be its fraction field.  Then for $x\in M$, we say that $x$ is in the \emph{Rees integral closure of $L$ in $M$}, written $x\in \Ric{L}{M}$, if for every valuation ring $V$ between $R$ and $Q$, $x$ is in the image $LV$ of $L \otimes_R V$ in $M \otimes_R Q$ (or equivalently, in $M \otimes_R V$).

In the general case, we write $x\in \Ric{L}{M}$ if for every minimal prime $\p$ of $R$, we have $x + \p M \in \Ric{\left(\frac{L + \p M}{\p M}\right)}{M/\p M}$, as modules over the ring $R/\p$.
\end{defn}

\begin{defn}[EHU integral closure]
Let $L \subseteq M$ be modules over a Noetherian ring $R$.  Then for $x\in M$, we say that $x$ is in the \emph{EHU integral closure of $L$ in $M$}, written $x\in \EHUic{L}{M}$, if for every $R$-homomorphism of the form $g: M \ra F$ for free modules $F$, we have $g(x) \in g(L)^-_F$.
\end{defn}

These would seem at first glance to have nothing to do with each other.  But we have the following:

\begin{prop}[Valuative Criterion]\label{pr:valcrit}
Let $L \subseteq M$ be modules over a Noetherian ring $R$ and let $x\in M$.  Then $x\in \EHUic{L}{M}$ if and only if for every map $M \ra F$ to a free module and every homomorphism $R \ra V$ to a valuation ring $V$ with kernel a minimal prime of $R$, we have $(L + Rx)' = L'$, where $(-)'$ denotes tensoring with $V$ and taking images in $V \otimes_R F$.
\end{prop}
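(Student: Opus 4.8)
The plan is to strip the definition of $\EHUic{L}{M}$ down to the known valuative characterization of the integral closure $N^-_F$ of a submodule inside a free module, and then match quantifiers. By definition, ``$x\in\EHUic{L}{M}$'' means precisely that $g(x)\in g(L)^-_F$ for every $R$-homomorphism $g\colon M\to F$ to a free module. For a fixed such $g$ and a homomorphism $R\to V$ to a valuation ring, the image $(L+Rx)'$ of $(L+Rx)\otimes_R V$ in $V\otimes_R F$ is $L'+V\cdot\overline{g(x)}$, where $\overline{g(x)}$ denotes the image of $g(x)$; hence ``$(L+Rx)'=L'$'' is equivalent to ``$\overline{g(x)}$ lies in the image of $g(L)\otimes_R V$ in $V\otimes_R F$.'' Since the quantifier over $g$ occurs on both sides of the Proposition, it therefore suffices to prove the following: for $N\subseteq F$ with $F$ free over the Noetherian ring $R$ and $z\in F$, one has $z\in N^-_F$ if and only if for every homomorphism $R\to V$ to a valuation ring with kernel a minimal prime of $R$, the image of $z$ in $V\otimes_R F$ lies in the image of $N\otimes_R V$. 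A direct-limit argument lets us take $F=R^n$ finitely generated.

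This criterion is, in essence, Rees's original valuative definition of integral closure applied in a free module; granting that $N^-_F$ agrees with Rees's closure there (see \cite[Chapter~16]{HuSw-book}), it can be quoted outright, with only a small bookkeeping point: a homomorphism $R\to V$ with kernel a minimal prime $\p$ need not satisfy $\Frac(V)=\Frac(R/\p)$, but replacing $V$ by $V\cap\Frac(R/\p)$ reduces to the valuation rings occurring in Rees's definition without changing the condition on the image of $z$. If instead one wants to prove the criterion, I would first reduce to $R$ a domain: writing $S=\Sym_R(F)=R[t_1,\dots,t_n]$, letting $T\subseteq S$ be the subring generated in degree one by $N$, and letting $\zeta\in S_1$ be the linear form corresponding to $z$ (so that $z\in N^-_F$ means $\zeta$ lies in the integral closure $\overline T$ of $T$ in $S$), one uses that an element of $S$ is integral over $T$ exactly when its image is integral over the image of $T$ coordinatewise in $S/\mathrm{Nil}(S)\hookrightarrow\prod_\p S/\p S$; the valuative condition decomposes identically, so one may assume $R$ is a domain with fraction field $Q$ and restrict to valuation rings with $R\subseteq V\subseteq Q$.

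In the domain case $S$ is a domain, so by the valuative criterion for the integral closure of a subring of a domain, $\zeta\in\overline T$ if and only if $\zeta\in W$ for every valuation ring $W$ of $\Frac(S)=Q(t_1,\dots,t_n)$ containing $T$. The ``if'' half is then easy: given such a $W$, the ring $V=W\cap Q$ is a valuation ring with $R\subseteq V\subseteq Q$, so by hypothesis $\zeta=\sum_j b_j\ell_j$ with $b_j\in V\subseteq W$, where $\ell_1,\dots,\ell_m\in T$ are the linear forms spanning $N$, whence $\zeta\in W$; letting $W$ vary gives $\zeta\in\overline T$. For the ``only if'' half, given $R\subseteq V\subseteq Q$ for which the image of $z$ in $V\otimes_R F$ is not a $V$-combination of the generators of $N$, one must exhibit a valuation ring $W$ of $Q(t_1,\dots,t_n)$ containing $T$ with $\zeta\notin W$: after pushing an integral equation for $\zeta$ over $T$ into $V[t_1,\dots,t_n]$ and changing coordinates over $\Frac(V)$ so that $N$ is spanned by a partial coordinate system, $W$ may be taken to be a Gauss-type (monomial) valuation extending that of $V$ which assigns to those coordinates positive, $\Z$-independent values that are infinitesimal relative to the value group of $V$; then $\zeta\in W$ would force the coordinates of $z$ relative to $N$ into $V$, a contradiction.

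I expect this last construction — equivalently, the forward implication of the Proposition — to be the crux: the subring valuative criterion only manufactures valuation rings of the large field $Q(t_1,\dots,t_n)$, and one must produce, from a prescribed valuation ring $V$ of $Q$, one that simultaneously dominates $T$ and is sharp enough to witness that $z$ is not in the image of $N\otimes_R V$. Since the coordinate change over $\Frac(V)$ generally has coefficients outside $V$, the delicate point is choosing the weights of the monomial valuation so that it remains nonnegative on $T$ while becoming negative on $\zeta$; verifying that such a choice exists is the one genuinely technical step, and it is exactly the content supplied by \cite[Chapter~16]{HuSw-book} should one prefer to cite rather than reprove it.
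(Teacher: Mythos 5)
Your skeleton overlaps substantially with the paper's: both reduce, by matching the quantifier over maps $M \ra F$, to a valuative characterization of $N^-_F$ for a submodule $N$ of a free module; both prove the reverse implication in the domain case by intersecting a valuation ring $W$ of $\Frac(\Sym(F))$ containing the subring $T$ generated by $N$ with $Q$ to manufacture the $V$ of the hypothesis; and both ultimately quote the finitely generated case for the hard forward implication (the paper cites \cite[Theorem 2.2]{EHU-Ralg-nono} rather than \cite[Chapter 16]{HuSw-book}, but either source works, and your monomial-valuation sketch is only offered as an alternative to that citation, so its vagueness is not fatal).

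The genuine gap is the opening claim that ``a direct-limit argument lets us take $F=R^n$ finitely generated.'' That reduction is valid for the forward implication, because an equation of integral dependence in $\Sym(F)$ involves only finitely many elements of $N$ and finitely many basis vectors (this is exactly Lemma~\ref{lem:fin}). It fails for the reverse implication: the hypothesis that the image of $z$ lies in $NV$ for every $V$ does not descend to the hypothesis for $N\cap G$ and any single finitely generated free summand $G$, since the finitely many elements of $N$ needed to express the image of $z$ over $V$ depend on $V$. This matters because the finitely generated case is already in the literature; the entire content of the proposition as stated is the infinitely generated case, which is why the paper defers the proof until after Lemma~\ref{lem:fin}. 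Your reverse implication is repairable --- the subring valuative criterion you invoke holds for an arbitrary subring $T$ of the possibly infinite-variable polynomial ring $\Sym(F)$, so the domain case needs no finiteness --- but your reduction to the domain case (checking integrality over $T$ coordinatewise modulo minimal primes) is itself only standard for finitely generated free modules. The paper closes this loop by applying the domain case modulo each of the finitely many minimal primes, using Lemma~\ref{lem:fin} to locate one finitely generated free submodule $M'$ in which all of these memberships are witnessed simultaneously, and only then invoking the finitely generated minimal-primes characterization; some such step is missing from your argument.
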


Under the assumption that $M$ is finitely generated (and using only Noetherian valuation rings), this was proved as \cite[the ``\emph{Valuative Criterion}'' part of Theorem 2.2]{EHU-Ralg-nono}.  We postpone the proof of the general case until page~\pageref{pf:valcrit}, after we have developed some tools for dealing with non-finitely generated modules.

\begin{cor} Let $R$ be a Noetherian ring and $L \subseteq M$ $R$-modules.  Then $\Ric{L}{M} \subseteq \EHUic{L}{M}$, with equality if $M$ is free (in which case both coincide with the first definition of this paper) or if $R$ is a domain. \end{cor}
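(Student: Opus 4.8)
The plan is to handle the three assertions separately. For the inclusion $\Ric{L}{M}\subseteq\EHUic{L}{M}$ I would argue by contraposition, invoking the Valuative Criterion. Suppose $x\notin\EHUic{L}{M}$. By Proposition~\ref{pr:valcrit} there are a homomorphism $g : M\ra F$ to a free module and a homomorphism $R\ra V$ to a valuation ring whose kernel is a minimal prime $\p$, with $(L+Rx)'\neq L'$ in $V\otimes_R F$; since $(L+Rx)'=L'+V\cdot(g(x)\otimes 1)$ and $L'=V\cdot g(L)$, this means $g(x)\otimes 1$ does not lie in the $V$-submodule of $V\otimes_R F$ generated by $g(L)$. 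Let $V_0=V\cap\Frac(R/\p)$, a valuation ring with $R/\p\subseteq V_0\subseteq\Frac(R/\p)$, hence a legitimate test object for the Rees closure over $R/\p$. Were the image of $x$ in $M\otimes_R\Frac(R/\p)$ equal there to $\sum_i l_i\otimes v_i$ for some $l_i\in L$ and $v_i\in V_0$, then applying $g$, enlarging the coefficient field from $\Frac(R/\p)$ to $\Frac(V)$, and using that $V\otimes_R F\hookrightarrow F\otimes_R\Frac(V)$ because $F$ is free, we would get $g(x)\otimes 1=\sum_i v_i\,(g(l_i)\otimes 1)\in V\cdot g(L)$ --- a contradiction. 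Hence $x\notin\Ric{L}{M}$.

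If $M=F$ is free, I claim all three operations equal the integral closure $L^-_F$ of the paper's first definition. Taking $g=\mathrm{id}_F$ in the definition of $\EHUic{L}{F}$ gives $\EHUic{L}{F}\subseteq L^-_F$; conversely, any homomorphism $g : F\ra F'$ of free modules induces a graded ring homomorphism $\Sym_R(F)\ra\Sym_R(F')$ sending the subring generated by $L$ into the one generated by $g(L)$, hence integral elements to integral elements, so $g(L^-_F)\subseteq g(L)^-_{F'}$ and therefore $L^-_F\subseteq\EHUic{L}{F}$. Now $\Ric{L}{F}\subseteq\EHUic{L}{F}=L^-_F$ by the first part; and if $x\in L^-_F$, then reducing an integral equation modulo a minimal prime $\p$ shows that the image of $x$ in $F/\p F$ lies in the integral closure of $(L+\p F)/\p F$, which over the domain $R/\p$ coincides with the Rees closure by the classical valuative criterion for integral closure in free modules (\cite[Chapter~16]{HuSw-book}, after reducing to finitely generated modules), so $x\in\Ric{L}{F}$. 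Thus all three operations agree with $L^-_F$.

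For $R$ a domain only the inclusion $\EHUic{L}{M}\subseteq\Ric{L}{M}$ is left, and here we may assume $M$ finitely generated. Suppose $x\notin\Ric{L}{M}$ and choose a Noetherian valuation ring $V_0$ with $R\subseteq V_0\subseteq Q:=\Frac(R)$ such that the image of $x$ in $M\otimes_R Q$ is not in the $V_0$-submodule $\Lambda$ generated by the image of $L$. Since $M\otimes_R V_0$ is finite over the Noetherian ring $V_0$, the module $\Lambda$ is a finitely generated, hence free, $V_0$-submodule of the finite-dimensional $Q$-vector space $M\otimes_R Q$; choosing a $V_0$-basis of $\Lambda$, completing it to a $Q$-basis of $M\otimes_R Q$, and expressing the image of $x$ in these coordinates, one finds a $Q$-linear functional on $M\otimes_R Q$ carrying $\Lambda$ into $V_0$ and sending $x$ to an element of strictly smaller value at $V_0$ than every element of $\Lambda$. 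As $M$ is finitely generated we have $\Hom_R(M,Q)=\Hom_R(M,R)\otimes_R Q$, so a suitable nonzero $R$-multiple of this functional is an honest $\varphi\in\Hom_R(M,R)$; then $\varphi(x)$ has strictly smaller value at $V_0$ than every element of the ideal generated by $\varphi(L)$, so $\varphi(x)\notin\overline{(\varphi(L))}=\varphi(L)^-_R$ and consequently $x\notin\EHUic{L}{M}$.

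The short step is the first one, because the Valuative Criterion already packages the passage between homomorphisms to free modules and valuation rings. I expect the real friction to be in the domain case: carrying out the reduction to finitely generated modules without altering $L$ (both closures are monotone in $L$, so one may not simply shrink it), and the denominator- and torsion-bookkeeping involved in extracting the functional $\varphi$ with the sharp valuation estimate.
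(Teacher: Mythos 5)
Your first two steps are correct, and they are essentially the derivation the paper intends: the corollary carries no proof of its own and is meant to fall out of Proposition~\ref{pr:valcrit} together with the identification $\EHUic{L}{F}=L^-_F$ for $F$ free (which is implicit in the proof of that proposition). Your contrapositive argument for $\Ric{L}{M}\subseteq\EHUic{L}{M}$, with $V_0=V\cap\Frac(R/\p)$ taken inside $\Frac(V)$, is exactly the right way to pass between the two kinds of test objects, and your treatment of the free case is fine (the passage from $L^-_F$ to $\Ric{L}{F}$ for infinitely generated $F$ does need the finite-reduction device of Lemma~\ref{lem:fin}, as in the proof of Proposition~\ref{pr:valcrit}, but you gesture at this correctly). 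For \emph{finitely generated} $M$ over a domain, your separating-functional construction is also correct, and is a pleasant, self-contained substitute for the more usual route of passing to $M$ modulo torsion, embedding it in a finite free module, and quoting the free case.

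The genuine gap is the sentence ``we may assume $M$ finitely generated'' in the domain case, and it is not the bookkeeping friction you anticipate: that reduction cannot be carried out, because the equality $\EHUic{L}{M}=\Ric{L}{M}$ is false for general modules over a domain. Take $R$ a Noetherian domain that is not a field, $Q=\Frac(R)$, and $L=R\subseteq M=Q$. Every $R$-homomorphism from $Q$ to a free module is zero: its image lies in a finite free direct summand because $F$ is torsion-free and $Q$ is generated over $R$ by fractions of $1$, and $\Hom_R(Q,R)=0$ since any value $f(q)$ lies in $\bigcap_{0\neq r}rR=0$. Hence the defining condition for the EHU closure is vacuous and $\EHUic{R}{Q}=Q$. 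On the other hand, for each valuation ring $V$ between $R$ and $Q$ the image of $L\otimes_RV=V$ in $M\otimes_RQ=Q$ is just $V$, so $\Ric{R}{Q}=\bigcap_VV=\bar{R}\subsetneq Q$. So the domain clause of the corollary genuinely requires $M$ finitely generated (or at least that $M$ modulo its torsion embeds in a free module); your argument proves exactly that case and no more, and the unproved reduction should be replaced by that hypothesis rather than patched.
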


In general, however, Rees integral closure may be strictly smaller than EHU integral closure:

\begin{example}\cite[immediately after the proof of Theorem 2.2]{EHU-Ralg-nono}
Let $k$ be a field, $R:=k[x]/(x^2)$, and $M := Rx$.  Then $\EHUic{0}{M}=M$, but $\Ric{0}{M} = 0$.
\end{example}

\section{Liftable integral closure - basic properties}\label{sec:properties}

Next we come to the primary definition of the current paper:

\begin{defn}[Liftable integral closure]
Let $L \subseteq M$ be $R$-modules.  Let $\pi: F \onto M$ be a surjection of a free $R$-module $F$ onto $M$.  Let $K := \pi^{-1}(L)$.  Then the \emph{liftable integral closure} of $L$ in $M$ is \[
\lic{L}{M} := \pi(K^-_F).
\]
\end{defn}

\begin{rmk*}
As noted earlier, the same approach is taken when defining tight closure of a submodule.  That is, tight closure is defined for submodules of free module, and then one extends to tight closure in arbitrary modules by lifting to a free module surjecting onto the ambient module and taking the tight closure there \cite[8.1--8.2]{HHmain-nono}.  One can even use symmetric algebras to do so (provided the free module is finite and there are enough test elements), as developed in McCulloch's thesis \cite[Chapter 3]{Mcc-thesis}.  In joint work in progress, the first named author and M. Hochster show how to use this approach toward a wide range of closure operations on ideals to get closure operations on submodules, recovering tight closure (in McCulloch's situation) and liftable integral closure as special cases \cite{nmeHo-heq}.

What we call liftable integral closure probably coincides with Brenner's submersive version of integral closure (\emph{op. cit.}) at least in the case of finitely generated modules.
\end{rmk*}

Given such a surjection, $\lic LM$ is obviously a submodule of $M$.  Next, we show that this is well-defined (i.e. independent of $\pi$).

\begin{prop}\label{pr:indep}
Let $L \subseteq M$ be a submodule inclusion.  Then $\lic{L}{M}$ is independent of choice of the free module surjection onto $M$.
\end{prop}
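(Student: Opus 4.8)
The natural strategy is the standard "two-surjections, pass to a common refinement" argument. Suppose $\pi_1 \colon F_1 \onto M$ and $\pi_2 \colon F_2 \onto M$ are two free surjections, with $K_i := \pi_i^{-1}(L)$; I want to show $\pi_1((K_1)^-_{F_1}) = \pi_2((K_2)^-_{F_2})$. By symmetry it suffices to compare each $\pi_i$ with a third surjection that dominates both, so the real content is a \emph{comparison lemma}: if $\phi \colon F' \onto F$ is a surjection of free modules compatible with surjections $\pi' \colon F' \onto M$ and $\pi \colon F \onto M$ (i.e. $\pi \circ \phi = \pi'$), and $K := \pi^{-1}(L)$, $K' := \pi'^{-1}(L) = \phi^{-1}(K)$, then $\pi'((K')^-_{F'}) = \pi(K^-_F)$. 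Given this lemma, form $F := F_1 \oplus F_2$ with the obvious surjection onto $M$, which dominates each $F_i$, and conclude.

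To prove the comparison lemma, note first that $\phi$ splits: choose a splitting $s \colon F \ra F'$ with $\phi \circ s = \mathrm{id}_F$, so $F' \cong F \oplus \ker\phi$ and $\ker\phi \subseteq K'$ (since $\pi'(\ker\phi) = \pi(\phi(\ker\phi)) = 0 \subseteq L$). The inclusion "$\supseteq$" after applying $\pi'$ should be the easy direction: $\phi(K') = K$, and integral closure is preserved by the induced surjection of symmetric algebras — more precisely, $\phi((K')^-_{F'}) \subseteq K^-_F$ because a relation of integral dependence over the subring of $\Sym(F')$ determined by $K'$ maps to one over the subring of $\Sym(F)$ determined by $K$ — hence $\pi'((K')^-_{F'}) = \pi(\phi((K')^-_{F'})) \subseteq \pi(K^-_F)$. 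For "$\subseteq$": given $x \in K^-_F$, lift it via the splitting to $s(x) \in F'$; then $s(x) \in (K')^-_{F'}$ because the integral dependence relation for $x$ pulls back along the \emph{split} surjection $\phi$ (the splitting lets one lift the relation, not just its image), and $s(x) + \ker\phi = \phi^{-1}(x)$, so $s(x)$ maps under $\phi$ into $K^-_F + \ker\phi$; finally $\pi'(s(x)) = \pi(\phi(s(x))) = \pi(x)$, giving $\pi(x) \in \pi'((K')^-_{F'})$. One also needs $s((K^-_F)) \subseteq (K')^-_{F'}$ more carefully: since $\ker\phi \subseteq K'$ and $s(K) \subseteq K'$ and $\Sym$ of a direct sum behaves well, the subring of $\Sym(F')$ cut out by $K'$ contains both $\Sym$ of $s(K)$ and $\Sym$ of $\ker\phi$, and integral closure there recovers $s(K^-_F) + (\text{stuff in }\ker\phi)$; this is where I should be most careful that the grading and the degree-1 part are respected.

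The main obstacle I anticipate is precisely this last point: showing that passing integral dependence through a split surjection of free modules $F' \onto F$ does not shrink things when we restrict to degree $1$ and then push down to $M$, i.e. controlling the interaction of integral closure in $\Sym(F')$ with the "extra" variables in $\ker\phi$. The key computational fact needed is that if $S = \Sym(F)$, $S' = \Sym(F') = S[\text{variables from }\ker\phi]$, $T \subseteq S$ is the Rees-type subalgebra of $K$ and $T' \subseteq S'$ that of $K'$, then $T'$ contains $T$ together with the full degree-$1$ span of $\ker\phi$, and the integral closure $\overline{T'}$ in $S'$ has degree-$1$ part equal to $K^-_F + \ker\phi$ — after which $\pi'$ kills $\ker\phi$ and we are done. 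This should follow from a routine but slightly delicate grading argument together with the fact that $\ker\phi$ consists of elements already in $K'$, so adjoining them changes nothing about integral dependence modulo $\ker\phi$. Once the comparison lemma is in hand, the proposition is immediate from the $F_1 \oplus F_2$ construction.
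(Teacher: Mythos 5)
Your argument is essentially correct, but it is organized differently from the paper's and has one step that needs repair. The paper does not pass to a common refinement at all: given two surjections $p\colon F\onto M$ and $\pi\colon G\onto M$, it uses projectivity to choose liftings $\gamma\colon F\to G$ and $\delta\colon G\to F$ directly between the two free modules, shows via the induced map of symmetric algebras that $\gamma$ carries $(\ker p)^-_F$ into $(\ker\pi)^-_G$ (persistence of integral dependence along a ring homomorphism), and then observes that an arbitrary preimage $z$ of $x$ in $G$ differs from $\gamma(y)$ by an element of $\ker\pi\subseteq(\ker\pi)^-_G$. Your route --- dominate both surjections by $F_1\oplus F_2$ and prove a comparison lemma for a compatible surjection $\phi\colon F'\onto F$ of free modules --- rests on exactly the same key fact (persistence along $\Sym$ of a module map lifting $\mathrm{id}_M$), applied once to $\phi$ for one inclusion and once to a splitting $s$ for the other; it is slightly longer but no less valid, and the split-surjection lemma is perhaps easier to visualize.

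Two specific comments. First, the one genuine flaw: the \emph{obvious} surjection $F_1\oplus F_2\onto F_1$ (coordinate projection) is \emph{not} compatible with the maps to $M$, since $\pi_1(a)\neq\pi_1(a)+\pi_2(b)$ in general. You must instead take $\phi(a,b):=a+\delta(b)$ for some lifting $\delta\colon F_2\to F_1$ of $\pi_2$ through $\pi_1$ (which exists by projectivity); this $\phi$ is a split surjection satisfying $\pi_1\circ\phi=\pi_1+\pi_2$, and then your comparison lemma applies. Second, the ``main obstacle'' you anticipate at the end is not actually an obstacle: you never need to compute the full degree-one part of the integral closure of $T'$ in $\Sym(F')$, nor that it equals $s(K^-_F)+\ker\phi$. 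You only need the inclusion $s(K^-_F)\subseteq(K')^-_{F'}$, which is immediate because $s(K)\subseteq K'=\phi^{-1}(K)$, so the subring of $\Sym(F')$ generated by $s(K)$ sits inside the one generated by $K'$, and an equation of integral dependence for $x$ over the subring generated by $K$ pushes forward under $\Sym(s)$ to one for $s(x)$. Since $\pi'(s(x))=\pi(x)$, that single element already witnesses $\pi(x)\in\pi'\bigl((K')^-_{F'}\bigr)$; no bookkeeping of the extra variables in $\ker\phi$ is required.
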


\begin{proof}
We may assume first that $L=0$.

Let $p: F \onto M$ and $\pi: G \onto M$ be surjective maps, where $F$, $G$ are free $R$-modules.  Let $U := \ker p$ and $V := \ker \pi$.  Since $F$, $G$ are projective, there exist liftings $\gamma: F \ra G$ and $\delta: G \ra F$ of $p$, $\pi$ respectively.  That is, $\pi \circ \gamma = p$ and $p \circ \gamma = \pi$.

Now, let $A := \Sym F$ and $B := \Sym G$.  Let $g: A \ra B$ be the unique lifting of $\gamma$ to the symmetric algebra level.  Let $\langle V \rangle$ denote the ideal of $B$ generated by the set $V \subseteq B_1 = G$, and $\langle U \rangle$ the corresponding ideal of $A$.  Then by composing with the natural map $B \ra B/\langle V \rangle$, we have a standard graded $R$-algebra map $\phi: A \ra B/\langle V \rangle$.  Moreover, since $U = \ker (F \ra G/V)$, which is the degree 1 part of the map $\phi$, it follows that $\langle U \rangle \subseteq \ker \phi$.  Hence, there is an induced map $\bar{\gamma}: A / \langle U \rangle \ra B / \langle V \rangle$.

Now, let $x\in M$, let $y \in F$ such that $p(y) = x$ and $y \in U^-_F$, and let $z \in G$ such that $\pi(z) = x$.  We want to show that $z\in V^-_G$.  To say that $y \in U^-_F$, it is equivalent to say that the image $\bar y$ of $y$ is nilpotent in $\Sym(F) / \langle U \rangle$.  That is, $\bar y^t =\bar 0$ for some $t\in \N$.  But then since $\bar \gamma$ is a ring homomorphism, it follows that $\overline{ \gamma(y)}^t = \bar \gamma(\bar y)^t =  \bar \gamma(\bar y^t) = \bar \gamma(\bar 0) = \bar 0$ in $\Sym(G) / \langle V \rangle$.  That is, $\gamma(y)^t \in \langle V \rangle$, whence $\gamma(y) \in V^-_G$.

On the other hand, $z - \gamma(y) \in V$.  To see this, we have $\pi(z-\gamma(y)) = \pi(z) - \pi(\gamma(y)) = \pi(z)-p(y) = x-x=0$, so $z-\gamma(y) \in \ker \pi = V$.  Putting these two facts together, we have \[
z = (z-\gamma(y)) + \gamma(y) \in V + V^-_G = V^-_G,
\]
as was to be shown.
\end{proof}

Note that if $M$ is free, then $\lic{L}{M} = L^-_M$.  In particular, for any ideal $I$, $\lic{I}{R} = \bar{I}$.

The following lemma allows us frequently to reduce to the finitely generated case.

\begin{lemma}[Reduction lemma]
\label{lem:fin}
Let $\Lambda$ be any index set.  Suppose $L \subseteq M$ are modules, and we have submodules $\{L_\lambda\}_{\lambda \in \Lambda}$, $\{M_\lambda\}_{\lambda \in \Lambda}$ of $M$ such that $M = \sum_{\lambda\in\Lambda} M_\lambda$, $L = \sum_{\lambda\in \Lambda} L_\lambda$, and $L_\lambda \subseteq L \cap M_\lambda$ for each $\lambda\in \Lambda$. Then \[
\lic{L}{M} = \bigcup_{\textrm{finite subsets } \sigma \subseteq \Lambda}
\lic{\left(\sum_{\lambda\in\sigma} L_\lambda\right)}{ \sum_{\lambda\in\sigma} M_\lambda}
\]
\end{lemma}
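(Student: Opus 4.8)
The plan is to prove the two inclusions separately, both resting on a single finiteness mechanism: an integral dependence relation over a subalgebra $R[A]$ of a symmetric algebra $\Sym(F)$ (with $A\subseteq F=\Sym(F)_1$) involves only finitely many elements of $F$, so it already takes place in $\Sym(F')$ for a finite-rank free direct summand $F'\subseteq F$, it persists in any enlargement $\Sym(F')\subseteq\Sym(\tilde F)$, and it remains an integral dependence relation over $R[\tilde A]$ for any submodule $A\subseteq\tilde A$. Beyond this I will use freely: that an element of a sum $\sum_\lambda N_\lambda$ of submodules lies in a finite subsum; that $\Sym$ carries a direct sum of free modules to the tensor product of the symmetric algebras, so that $\Sym(F')$ is a graded $R$-subalgebra of $\Sym(F'\oplus F'')$; that $(-)^-_F$ is order-preserving in its submodule argument; and Proposition~\ref{pr:indep}, which lets me compute liftable integral closures with whatever free surjection is convenient.

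For the inclusion $\bigcup_\sigma\lic{L_\sigma}{M_\sigma}\subseteq\lic{L}{M}$ it is enough, for each finite $\sigma\subseteq\Lambda$, to show $\lic{L_\sigma}{M_\sigma}\subseteq\lic{L}{M}$, where $L_\sigma:=\sum_{\lambda\in\sigma}L_\lambda$ and $M_\sigma:=\sum_{\lambda\in\sigma}M_\lambda$; note $L_\sigma\subseteq L\cap M_\sigma\subseteq M_\sigma$ and $L_\sigma\subseteq L$. I would choose a free surjection $q\colon F_\sigma\onto M_\sigma$, set $K_\sigma:=q^{-1}(L_\sigma)$, choose a free surjection $F''\onto M$, and form $\tilde F:=F_\sigma\oplus F''$ with the evident surjection $\tilde p\colon\tilde F\onto M$ that restricts to $q$ (followed by the inclusion $M_\sigma\subseteq M$) on $F_\sigma$. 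Since $\tilde p(K_\sigma)=L_\sigma\subseteq L$ we get $K_\sigma\subseteq\tilde K:=\tilde p^{-1}(L)$, so an integral dependence relation over $R[K_\sigma]\subseteq\Sym(F_\sigma)$ is one over $R[\tilde K]\subseteq\Sym(\tilde F)$ and hence $(K_\sigma)^-_{F_\sigma}\subseteq\tilde K^-_{\tilde F}$. Applying $\tilde p$, and using Proposition~\ref{pr:indep} to identify $q((K_\sigma)^-_{F_\sigma})$ with $\lic{L_\sigma}{M_\sigma}$ and $\tilde p(\tilde K^-_{\tilde F})$ with $\lic{L}{M}$, gives the desired inclusion.

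For the reverse inclusion, take $x\in\lic{L}{M}$, fix any free surjection $p\colon F\onto M$ with $K:=p^{-1}(L)$, and choose $y\in K^-_F$ with $p(y)=x$; then, viewing $y$ in degree one, there is a homogeneous integral dependence relation $y^n+c_{n-1}y^{n-1}+\dots+c_0=0$ in $\Sym(F)$ in which each $c_i$ is an $R$-linear combination of products of $n-i$ elements of $K$. Only finitely many elements of $K$ appear; I collect them into a finite set $\Sigma\subseteq K$ and let $F'\subseteq F$ be the free direct summand spanned by the (finitely many) basis vectors occurring in $y$ and in the members of $\Sigma$. Then the whole relation lives in $\Sym(F')$ and $\Sigma\subseteq K\cap F'$. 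Now $p(\Sigma)$ is a finite subset of $p(K)=L=\sum_\lambda L_\lambda$, and $M':=p(F')$ is a finitely generated submodule of $M=\sum_\lambda M_\lambda$, so there is a finite $\sigma\subseteq\Lambda$ with $p(\Sigma)\subseteq\sum_{\lambda\in\sigma}L_\lambda=L_\sigma$ and $M'\subseteq\sum_{\lambda\in\sigma}M_\lambda=M_\sigma$ at once. Choose a free surjection $F''\onto M_\sigma$, put $\tilde F:=F'\oplus F''$, and let $\tilde p\colon\tilde F\onto M_\sigma$ restrict to $p$ on $F'$ (legitimate since $p(F')=M'\subseteq M_\sigma$). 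Each $k\in\Sigma$ satisfies $\tilde p(k)=p(k)\in L_\sigma$, so $\Sigma\subseteq\tilde K:=\tilde p^{-1}(L_\sigma)$; hence the relation above exhibits $y$ as integral over $R[\tilde K]\subseteq\Sym(\tilde F)$, i.e.\ $y\in\tilde K^-_{\tilde F}$, and therefore $x=p(y)=\tilde p(y)\in\tilde p(\tilde K^-_{\tilde F})=\lic{L_\sigma}{M_\sigma}$ by Proposition~\ref{pr:indep}. This puts $x$ in the right-hand union.

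The step I expect to be the main obstacle is this last argument's handling of the gap between $L_\sigma=\sum_{\lambda\in\sigma}L_\lambda$ and $L\cap M_\sigma$. Since the hypothesis only supplies $L_\lambda\subseteq L\cap M_\lambda$, a careless localization of the computation to $M_\sigma$ would land $x$ merely in $\lic{(L\cap M_\sigma)}{M_\sigma}$, which may be strictly larger; tracking the explicit finite set $\Sigma$ — rather than replacing $L$ by $L\cap M'$ or $L\cap M_\sigma$ outright — is exactly what lets me enlarge $\sigma$ until the finitely many relevant elements of $L$ are genuinely witnessed by the $L_\lambda$, $\lambda\in\sigma$. A secondary point deserving care is that every manipulation above is finitary and uses neither Noetherianity of $R$ nor finite generation of $M$, which is the whole reason for proving the lemma in this form; once that is checked the argument goes through verbatim in the stated generality.
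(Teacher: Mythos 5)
Your proof is correct and rests on the same mechanism as the paper's: an integral dependence equation in the symmetric algebra involves only finitely many degree-one elements, so it descends to a finite-rank free summand and hence to a finite subfamily of the $M_\lambda$, $L_\lambda$. The only (cosmetic) differences are that you start from an arbitrary free surjection and build a suitable surjection onto $M_\sigma$ afterwards, and that you track the $L_\lambda$ explicitly rather than first quotienting by $L$, whereas the paper adapts the surjection to the decomposition from the outset by taking $F=\bigoplus_\lambda F_\lambda$; your explicit treatment of the easy inclusion and of the gap between $\sum_{\lambda\in\sigma}L_\lambda$ and $L\cap M_\sigma$ is a welcome bit of extra care.
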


\begin{proof}
First, by definition of liftable integral closure, we may immediately quotient out by $L$ and assume that every $L_\lambda=0$.

Next, for each $\lambda\in \Lambda$, let $\pi_\lambda: F_\lambda \onto M_\lambda$ be a surjection from a free $R$-module.  For each $\lambda$, let $X_\lambda$ be a set of free module generators of $F_\lambda$ over $R$, defined in such a way that all the sets $\{X_\lambda \mid \lambda \in \Lambda \}$ are disjoint from one another.  Then with $F := \bigoplus_{\lambda \in \Lambda} F_\lambda$, we have a short exact sequence \[
0 \ra U \ra F \arrow{\pi} M \ra 0, 
\]
where $\pi$ is induced from the maps $\pi_\lambda$.

Now let $z\in \lic{0}{M}$.  Let $y\in F$ such that $\pi(y) = z$.  Then in $S := \Sym(F)$, there is an equation of integrality of the form \begin{equation}\label{eq:inty}
y^n + a_1 y^{n-1} + \cdots + a_{n-1} y + a_n = 0,
\end{equation}
where each $a_j \in (US)^j$.  That is each $a_j$ is an $R$-linear combination of $j$-fold formal products of elements of $U$, where \emph{each} such element is an element of a direct sum of only finitely many of the $F_\lambda$s.  For the same reason, $y$ itself can be expressed in terms of only finitely many of the $F_\lambda$s.  Putting this together, it follows that there is some finite list $\lambda_1, \dotsc, \lambda_t$ of elements of $\Lambda$ such that Equation~\ref{eq:inty} is an equation of integrality in $F' := \bigoplus_{i=1}^t F_{\lambda_i}$ of $y$ over the submodule $U' := U \cap F'$.

Combine this with the fact that $\sum_{i=1}^t M_{\lambda_i} \cong F'/U'$ canonically, and the statement follows.
\end{proof}

At this point, we possess the tools to prove the Valuative Criterion in general:

\begin{proof}[Proof of Proposition~\ref{pr:valcrit}]\label{pf:valcrit}
Throughout, we may assume that $M$ is a free $R$-module (albeit for different reasons from when we are dealing with liftable integral closure).

For the first inclusion, take any $x\in \EHUic{L}{M} = L^-_M$.  Then by Lemma~\ref{lem:fin}, there is some finitely generated free submodule $G$ of $M$ such that, letting $K := L \cap G$, we have $x \in K^-_G$.  Then by the finitely generated case of the Valuative Criterion, $x\in \Ric{K}{G} \subseteq \Ric LM$.

For the opposite inclusion, we first prove the result when $R$ is a domain.  Let $Q$ be its field of fractions.  Let $T$ be the subring of $\Sym_R(M)$ generated by $L$.  Let $W$ be a valuation domain between $T$ and $\Frac(T)$.  Then $V := W \cap Q$ is a $Q$-valuation domain containing $R$.  So for any $x$ satisfying the Valuative Criterion, we have $x \in LV$, the image of the map $L \otimes_RV \ra F \otimes_RV$.  But $LV \subseteq W$, so $x\in W$.  Since $W$ was arbitrary, it follows that $x$ is integral over $T$, and hence $x \in \EHUic{L}{M}$.

Finally we show the general case, where $R$ is not necessarily a domain but $x$ satisfies the Valuative Criterion in $M$ over $L$.  Then for each $\p \in \Min(R)$, $\bar{x} \in M/\p M$ satisfies the valuative criterion over $(L+\p M)/\p M$ as $(R/\p)$-modules.  But $R/\p$ is a domain, so by the previous paragraph, $x\in (L + \p M)^-_{M/\p M}$ as $(R/\p)$-modules.  Now fix a free basis of $M$ over $R$, and let $\{M_\lambda\}$ be the set of all free submodules of $M$ generated by finite subsets of that basis.  Then by Lemma~\ref{lem:fin} (and since there are only finitely many minimal primes of $R$), there is some $M' := M_\lambda$ such that if we let $L' := L \cap M'$, we have $x\in M'$ and for each $\p \in \Min(R)$, $x \in (L' + \p M')^-_{M'/\p M'}$ as $(R/\p)$-modules.  But then by the usual relationship between integral closure in finitely generated free modules and minimal primes, it follows that $x \in (L')^-_{M'} \subseteq L^-_M = \EHUic{L}{M}$, as was to be shown.
\end{proof}

We next collect several important properties of liftable integral closure:

\begin{prop}\label{pr:list}
Let $R$ be Noetherian and let $L \subseteq M$ be $R$-modules.
\begin{enumerate}
 \item\label{it:idem} {\emph{(Idempotence)}} $\lic{L}{M} = \lic{(\lic LM)}M$.
 \item\label{it:functor} {\emph{(Functoriality)}} If $h: M \ra N$ is a homomorphism of $R$-modules, then $h(\lic{L}{M}) \subseteq \lic{h(L)}{N}$.
 \item\label{it:submods} {\emph{(Submodules)}} If $K \subseteq L$ is a submodule, then $\lic{K}{L} \subseteq \lic{K}{M} \subseteq \lic{L}{M}$.
  \item\label{it:directsum} {\emph{(Direct sums)}} If $N \subseteq T$ is another inclusion of $R$-modules, then
\[
\lic{(L \oplus N)}{M \oplus T} = \lic{L}{M} \oplus \lic{N}{T}
\]
 \item\label{it:tc} {\emph{(Comparison to tight closure)}} Suppose that $M$ is finitely generated and either (a) $R$ has characteristic $p$ and is essentially of finite type over an excellent local ring, or (b) $R$ is finitely generated over a field of characteristic $0$.  Then $L^*_M \subseteq \lic{L}{M}$.
 \item\label{it:Ric} {\emph{(Comparison to Rees integral closure)}} We have $\lic{L}{M} \subseteq \Ric{L}{M}$, with equality if $M$ is free.
 \item\label{it:colon} {\emph{(Colons)}} If $L=\lic{L}{M}$, $J$ is an ideal, and $U \subseteq M$ another submodule, then $\lic{(L :_M J)}{M} = L:_MJ$ and $\overline{(L :_RU)} = L :_RU$.
 \item\label{it:persistence} {\emph{(Persistence)}} Let $R \ra S$ be a homomorphism of Noetherian rings.  Then
\[
 \im (\lic{L}{M} \otimes_RS \ra M \otimes_RS) \subseteq \lic{\left(\im (L\otimes_RS \ra M \otimes_R S)\right)}{M \otimes_RS},
\]
where the closure on the right-hand side is taken as $S$-submodules.
 \item\label{it:flat} {\emph{(Normal base change)}} If in the situation of Property~(\ref{it:persistence}), the ring homomorphism is \emph{normal} (\emph{i.e.} flat with geometrically normal fibers), then the displayed $S$-module containment is an equality.
 \item\label{it:semiprime} {\emph{(Semiprime property)}} For any ideal $J$, $\lic{(JL)}{M} = \lic{(\bar{J}\lic{L}{M})}{M}.$
 \item\label{it:Nak} {\emph{(Nakayama property)}} If $(R,\m)$ is local and Noetherian, and $M/L$ is finitely generated, then $\lic{L}{M} \subseteq L + \m M$.  Thus, $\lic{L}{M}=M$ if and only if $L=M$.
\end{enumerate}
\end{prop}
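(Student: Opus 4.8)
The plan is to reduce every item to a statement about the classical integral closure $(-)^-_F$ of a subring of a symmetric algebra, via the defining free presentation. Fix a surjection $\pi\colon F\onto M$ from a free module, put $K:=\pi^{-1}(L)$, so that $\ker\pi\subseteq K$ and $\lic LM=\pi(K^-_F)$. The one bookkeeping identity used throughout is that $\pi^{-1}(\pi(N))=N$ for any submodule $N\subseteq F$ with $\ker\pi\subseteq N$; applied to $N=K^-_F$ it gives $\pi^{-1}(\lic LM)=K^-_F$, which is exactly what lets us transport facts about $(-)^-_F$ to facts about $\lic{}{}$. When a finiteness hypothesis on $F$ is needed I will invoke the Reduction Lemma~\ref{lem:fin} to replace $F$ by a suitable finite free submodule (and, for the base-change items, use additionally that $-\otimes_RS$ commutes with the relevant filtered unions).

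Functoriality~(\ref{it:functor}) is the first and key step, and it powers the \emph{soft} part of the list. Given $h\colon M\to N$, choose a free presentation $\rho\colon G\onto N$; since $F$ is projective, $h\circ\pi$ lifts to some $\psi\colon F\to G$ with $\rho\psi=h\pi$, and $\psi$ extends to a graded $R$-algebra map $\Sym_R F\to\Sym_R G$ taking $K$ into $K':=\rho^{-1}(h(L))$. Any equation of integral dependence of $y\in K^-_F$ over the subring generated by $K$ maps to one of $\psi(y)$ over the subring generated by $K'$, so $\psi(y)\in(K')^-_G$ and hence $h(\pi(y))=\rho(\psi(y))\in\rho((K')^-_G)=\lic{h(L)}{N}$. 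From~(\ref{it:functor}) I then read off: the first inclusion of~(\ref{it:submods}) by applying it to $L\into M$, the second by monotonicity of $(-)^-_F$ under inclusion of submodules transported through $\pi$; the Colon property~(\ref{it:colon}) by writing $L:_MJ=\bigcap_{a\in J}\mu_a^{-1}(L)$ with $\mu_a$ multiplication by $a$, noting $\mu_a\bigl(\lic{\mu_a^{-1}(L)}{M}\bigr)\subseteq\lic LM=L$ so that $\lic{L:_Ma}{M}=L:_Ma$, and intersecting (and symmetrically for $\overline{(L:_RU)}=L:_RU$, using the maps $r\mapsto ru$ and $\lic{I}{R}=\overline{I}$); and the Nakayama property~(\ref{it:Nak}) by first quotienting out $L$ to reduce to $\lic{0}{M}\subseteq\m M$ for finitely generated $M$, then observing that an element of $\lic{0}{M}$ lying outside $\m M$ would survive under a suitable $R$-linear $M\onto M/\m M\onto R/\m$, contradicting~(\ref{it:functor}) together with $\lic{0}{R/\m}=0$ (as $\overline{\m}=\m$); the equivalence $\lic LM=M\iff L=M$ is then immediate from the classical Nakayama lemma.

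The remaining items are each obtained by quoting the corresponding property of integral closure of subrings of symmetric algebras and transporting it through $\pi$ by the preimage identity: Idempotence~(\ref{it:idem}) from $(K^-_F)^-_F=K^-_F$; Direct sums~(\ref{it:directsum}) from $(K\oplus K')^-_{F\oplus G}=K^-_F\oplus(K')^-_G$ for free $F,G$ (one containment from the canonical splittings making $\Sym_R F$ a retract of $\Sym_R(F\oplus G)$, the other by extending equations of integral dependence along $F\into F\oplus G$); the Semiprime property~(\ref{it:semiprime}) from $(\mathfrak aN)^-_F=(\overline{\mathfrak a}\,N^-_F)^-_F$ for submodules $N$ of free $F$, combined with Idempotence; the tight closure comparison~(\ref{it:tc}) from the inclusion of tight closure in integral closure for submodules of free modules, the hypotheses on $R$ being precisely what makes the lifting definition of tight closure of a submodule available; the Rees comparison~(\ref{it:Ric}) from $K^-_F=\Ric KF$ for free $F$ (so that when $M$ itself is free, $\lic LM=L^-_M=\Ric LM$ by the Corollary above) together with the functoriality of Rees integral closure under surjections, which I will check directly from its valuative definition by reducing to the minimal primes of $R$; and Persistence~(\ref{it:persistence}) together with Normal base change~(\ref{it:flat}) from the behaviour of $(-)^-_F$ under the identification $\Sym_R F\otimes_RS\cong\Sym_S(F\otimes_RS)$ --- a containment always, since integral dependence is preserved by ring maps, and an equality when $R\to S$ is flat with geometrically normal fibers, by the normal-base-change theorem for integral closure (cf.\ \cite[Ch.~19]{HuSw-book} and \cite{EHU-Ralg-nono}).

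I expect the real work to lie not in these formal transports but in the \emph{free-module} inputs they rely on: the semiprime identity $(\mathfrak aN)^-_F=(\overline{\mathfrak a}\,N^-_F)^-_F$, the compatibility of $(-)^-_F$ with flat, geometrically-normal base change, and the containment of tight closure in integral closure, each needed in the present generality where $F$ need not be finitely generated and $R$ need not be reduced or a domain --- and in confirming that ``flat with geometrically normal fibers'' is exactly the hypothesis forcing equality in~(\ref{it:flat}). One must also verify that the Reduction Lemma interacts cleanly with the constructions appearing above (colons, the Nakayama quotient, and $-\otimes_RS$); since the relevant quotient module is finitely generated in each case where this is invoked, I expect this to be routine.
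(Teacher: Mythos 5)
Your proposal is correct, and its overall architecture --- free presentations, symmetric algebras, functoriality as the workhorse, and the Reduction Lemma for infinite modules --- is the same as the paper's; most items ((\ref{it:idem}), (\ref{it:functor}), (\ref{it:submods}), (\ref{it:directsum}), (\ref{it:tc}), (\ref{it:Ric}), (\ref{it:persistence}), (\ref{it:flat}), (\ref{it:semiprime})) are handled essentially as in the paper. Two items genuinely diverge. For the colon property~(\ref{it:colon}) the paper works inside $\Sym(F)$: it lifts $L:_MJ$ to $K:_FJ$ and multiplies an explicit equation of integral dependence by $j^n$ (resp.\ $v^n$) to land in $K^-_F=K$; you instead write $L:_MJ=\bigcap_{a\in J}\mu_a^{-1}(L)$ and deduce everything formally from Functoriality applied to the multiplication maps $\mu_a$ (resp.\ $r\mapsto ru$) together with the fact that an intersection of closed submodules is closed by~(\ref{it:submods}). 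Your route is softer and shorter and avoids touching integral equations, at the cost of hiding the computation inside the already-proved Functoriality; the check goes through ($a\cdot\lic{(L:_Ma)}{M}\subseteq\lic{(a(L:_Ma))}{M}\subseteq\lic{L}{M}=L$). For the Nakayama property~(\ref{it:Nak}) the paper takes a \emph{minimal} free presentation of $M/L$ and uses that $\m F$ is integrally closed in $F$; you instead map $M/L$ onto $R/\m$ and use $\overline{\m}=\m$ via Functoriality --- again a valid, slightly more formal variant of the same underlying fact. One small inaccuracy worth fixing: in~(\ref{it:tc}) the hypotheses (a)/(b) are not what make the lifting definition of tight closure available (that definition always exists); they are what guarantee persistence of tight closure to the valuation rings appearing in the Valuative Criterion, which is how the paper obtains $L^*_F\subseteq L^-_F$ for free $F$. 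This does not affect correctness, since the free-module containment you invoke does hold under exactly those hypotheses.
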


\begin{proof}
(\ref{it:idem}): We may assume that $M$ is free.  Then by looking at the symmetric algebra of the free module $M$, the property follows from the standard fact that whenever $A \subseteq B$ is an extension of commutative rings, the integral closure of $A$ in $B$ is integrally closed in $B$.

(\ref{it:functor}): First, we may assume $L=0$, so that we want to show that $h(\lic{0}{M}) \subseteq \lic{0}{N}$.

Next, lift to a map $\tilde{h}: F \ra G$ of free modules, so that we get the following commutative diagram with exact rows:
\[\xymatrix{
0 \ar[r] &U \ar[r] \ar[d]_{h_0} &F \ar[r]^{\pi} \ar[d]_{\tilde{h}} &M \ar[r] \ar[d]_h &0 \\
0 \ar[r] &V \ar[r] &G \ar[r]_p &N \ar[r] &0
}
\]
Then $\tilde{h}$ lifts to a map $H: \Sym(F) \ra \Sym(G)$ of symmetric algebras.  Let $x\in \lic{0}{M}$.  Then there is some $y\in F$ with $\pi(y) = x$ such that $y \in U^-_F$, which means that $y$ is in the integral closure of the subalgebra $S_F(U)$ generated by $U$ in $\Sym(F)$.  By persistence of integral closure of subalgebras along ring maps, it follows that $\tilde{h}(y) = H(y)$ is integral over the subalgebra $S_G(V)$ generated by $V$ in $\Sym(G)$, which means that $\tilde{h}(y) \in V^-_G$, whence $h(x) = h(\pi(y)) = p(\tilde{h}(y)) \in \lic{0}{N}$, as required.

(\ref{it:submods}): The first inclusion follows from (\ref{it:functor}) applied to the inclusion map $L \into M$.  The second inclusion is clear since we may assume $M$ is a free module.

(\ref{it:directsum}): This follows from properties~(\ref{it:functor}) and (\ref{it:submods}), applied to the canonical projection and inclusion maps of the direct sum.

(\ref{it:tc}): First, since both tight closure and liftable integral closure are independent of surjection from free modules, we may assume $M$ is free.

Recall that for a regular Noetherian local ring $S$, any submodule of any $S$-module is tightly closed.

Now let $x \in L^*_M$ and let $R \ra V$ be a map to a Noetherian valuation ring with kernel a minimal prime of $R$.  Then $x' \in (LV)^*_{M \otimes_R V} = LV$ (since $V$ is regular and tight closure persists from $R$ to $V$ \cite[Theorem 6.24]{HHbase-nono}), so $x \in \Ric{L}{M} = \lic{L}{M}$.

(\ref{it:Ric}): Let $\pi: F \onto M$ be a surjection from a free module, let $K := \pi^{-1}(L)$, and let $x \in \lic{L}{M}$.  Let $y\in F$ with $\pi(y)=x$.  Going mod a minimal prime, we may assume $R$ is a domain, with quotient field $Q$. Let $V$ be a valuation ring between $R$ and $Q$.  Then by the Valuative Criterion, $y' \in KV$.  But then $x'=\pi(y)' = \pi'(y') \in LV$, so that $x \in \Ric{L}{M}$.

If, on the other hand, $M=F$ is free, then $\Ric{L}{M} = \EHUic{L}{M} = \lic{L}{M} = \ic{L}{M}$.

(\ref{it:colon}): Let $\pi: F \onto M$ be a surjection from a free module.  Let $K := \pi^{-1}(L)$ and $V := \pi^{-1}(U)$.  It follows easily that $K :_F J = \pi^{-1}(L :_M J)$ and $L :_R U = K :_R V$, so it suffices to prove the analogous formulas for $K$, $V$, and $F$.

First let $y \in (K :_F J)^-_F$.  Then in $S := \Sym(F)$, we have an equation of the form: \[
y^n + \alpha_1 y^{n-1} + \cdots + \alpha_n = 0,
\]
where each $\alpha_i \in ((K :_FJ)S)^i$.  Take any $j\in J$.  Then $j^i \alpha_i \in (KS)^i$, so multiplying the displayed equation by $j^n$, we get \[
(jy)^n + (j \alpha_1) (jy)^{n-1} + \cdots + (j^{n-1} \alpha_{n-1}) (jy) + j^n \alpha_n = 0.
\]
This shows that $jy \in K^-_F = K$, as required.

Similarly, if $r\in \overline{(K :_R V)}$, then we have the following equation in $R$: \[
r^n + c_1 r^{n-1} + \cdots + c_n = 0,
\]
where each $c_j \in (K :_R V)^j$.  Now let $v \in V$, and in $S=\Sym(F)$ it follows that $c_j v^j \in (KS)^j$.  So multiplying the displayed equation by $v^n$ in $S$, we get: \[
(rv)^n + (c_1 v) (rv)^{n-1} + \cdots + c_n v^n = 0,
\]
whence $rv \in K^-_F = K$.  Since $v\in V$ was arbitrary, it follows that $r \in  K :_R V$, as required.

(\ref{it:persistence}): We may assume immediately that $M$ is free.  Let $\{M_\lambda\}$ be the set of finitely generated free $R$-submodules of $M$, and let $L_\lambda := L \cap M_\lambda$ for each $\lambda$. Note that $\{M_\lambda\}$ form a direct limit system under inclusion, whose direct limit is $M$, and similarly for the $\{L_\lambda\}$.  Accordingly, let $z \in \im (\lic{L}{M} \otimes_RS \ra M \otimes_RS)$. Then there is some $\lambda$ such that $z \in \im (\lic{(L_\lambda)}{M_\lambda} \otimes_RS \to M \otimes_R S)$.  Hence,
\[
z = \sum_{i=1}^n z_i \otimes s_i,
\]
where $z_i \in \lic{(L_\lambda)}{M_\lambda}$ and $s_i \in S$ for $1 \leq i \leq n$. But since Property~(\ref{it:persistence}) holds for submodules of finitely generated free modules (which follows, in turn, from the persistence of integral closure of subrings), we have that
\[
z_i \otimes s_i \in \im [\lic{(\im L_\lambda \otimes S \to M_\lambda \otimes S)}{M_\lambda \otimes S} \to M \otimes S].
\]
But then by Properties~(\ref{it:functor}) and (\ref{it:submods}), it follows that
\[
 z_i \otimes s_i \in \lic{\left(\im (L\otimes_RS \ra M \otimes_R S)\right)}{M \otimes_RS},
\]
as was to be shown.

(\ref{it:flat}): As usual, we may assume $M$ is free.  We need only show that $\lic{(L \otimes_R S)}{M \otimes_R S} \subseteq \lic{L}{M} \otimes_R S$.  So let $z \in \lic{(L \otimes_R S)}{M \otimes_R S}$.  Let $\{M_\lambda\}$ and  $\{L_\lambda\}$ be as above.  By flatness, it follows that the systems $\{L_\lambda \otimes_R S\}$ and $\{M_\lambda \otimes_R S\}$, of $S$-submodules of $L \otimes S$ and $M \otimes S$ respectively, are also injective direct limit systems, with unions equal to $L \otimes S$ and $M \otimes S$ respectively.  Accordingly, by Lemma~\ref{lem:fin}, there is some $\lambda$ such that $z \in \lic{(L_\lambda \otimes_R S)}{M_\lambda \otimes_R S}$.  But since Property~(\ref{it:flat}) holds for submodules of finitely generated free modules (as the argument in the proof of \cite[Corollary 19.5.2]{HuSw-book} works just as well in this case), it follows that $z\in \lic{(L_\lambda)}{M_\lambda} \otimes_R S$.  By Property~(\ref{it:submods}) on the $R$-module inclusions involved, $\lic{(L_\lambda)}{M_\lambda} \subseteq \lic{L}{M}$, so that by flatness,
\[
z \in \lic{(L_\lambda)}{M_\lambda}\otimes_R S \subseteq \lic{L}{M} \otimes_R S,
\]
as was to be shown.

(\ref{it:semiprime}): 
As usual we may assume $M$ is free.  Use Lemma~\ref{lem:fin} to reduce to the case where $M$ is finitely generated.    Then the statement is clear via use of the Valuative Criterion.

(\ref{it:Nak}): Suppose $L \neq M$.  Let $G \arrow{\psi} F \ra M/L \ra 0$ be a minimal free presentation of $M/L$, and let $U := \im \psi$.  By minimality, $U \subseteq \m F$.  But $\m F$ is integrally closed in $F$, as is clear by tensoring everything with $R/\m$ and noting that subspaces of vector spaces are always integrally closed.  Hence $U^-_F \subseteq (\m F)^-_F = \m F$, so that
\[
\frac{\lic{L}{M}}{L} = \lic{0}{M/L} = \pi(U^-_F) \subseteq \m \pi(F) = \m (M/L) = \frac{L + \m M}{L}.
\]
whence $\lic{L}{M} \subseteq L + \m M$.  The last statement follows from the Nakayama lemma.
\end{proof}

\begin{rmk*}
Property (\ref{it:Nak}) of the above Proposition shows that for finitely generated modules, it makes no sense to say that a module is `liftably integral' over a submodule (though it does for infinite modules; see Proposition~\ref{pr:ihull} below).  This shows that liftable integral closure is quite a distinct notion from previous notions of integral closure of submodules.  For example, let $R$ be any Noetherian local domain that contains an ideal $J$ that is not integrally closed.  Let $I =\bar{J}$ be its integral closure.  Then $\Ric{J}{I} = \EHUic{J}{I}=I$ (since $JV = IV$ for all ring homomorphisms $R \ra V$ with kernel a minimal prime of $R$ and $V$ a rank 1 DVR), but $\lic{J}{I}  \neq I$.
\end{rmk*}

Finally for this section, here is an equivalent way to define liftable integral closure:

\begin{lemma}
Let $R$ be a Noetherian ring, and let $L \subseteq M$ be (\emph{resp.} finite) $R$-modules.

Consider \emph{all} surjections $\pi: Z \onto M$ from $R$-modules (\emph{resp.} from finite $R$-modules) $Z$, and introduce the notation $L^\pi := \pi^{-1}(L)$ and $M^\pi := Z$.  Then \[
\lic{L}{M} = \bigcap_{\pi} \pi(\EHUic{(L^{\pi})}{M^\pi}) = \bigcap_{\pi} \pi( \Ric{({L^\pi})}{M^\pi}).
\]
\end{lemma}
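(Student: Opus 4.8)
The plan is to sandwich everything in a circle of containments
\[
\lic{L}{M} \;\supseteq\; \bigcap_{\pi} \pi\bigl(\EHUic{(L^\pi)}{M^\pi}\bigr) \;\supseteq\; \bigcap_{\pi} \pi\bigl(\Ric{(L^\pi)}{M^\pi}\bigr) \;\supseteq\; \lic{L}{M},
\]
which forces all three to coincide. I would run the argument uniformly in the ``resp.\ finite'' variant: at every place where a new module is introduced it stays finitely generated whenever $M$ is (using that $R$ is Noetherian), so the same proof works when $\pi$ ranges only over surjections from finite modules.

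The middle containment is immediate: for every surjection $\pi$, the Corollary following Proposition~\ref{pr:valcrit} gives $\Ric{(L^\pi)}{M^\pi} \subseteq \EHUic{(L^\pi)}{M^\pi}$, hence $\pi(\Ric{(L^\pi)}{M^\pi}) \subseteq \pi(\EHUic{(L^\pi)}{M^\pi})$ termwise, and intersecting over all $\pi$ yields the claim. For the first containment I would simply realize $\lic{L}{M}$ as one of the terms of the left-hand intersection: pick a surjection $\pi_0\colon F \onto M$ with $F$ a free module (finite free if $M$ is finitely generated), and set $K := \pi_0^{-1}(L) = L^{\pi_0}$. Since $F$ is free, $\EHUic{K}{F} = K^-_F$ (Corollary, or Proposition~\ref{pr:list}(\ref{it:Ric})), so $\pi_0(\EHUic{K}{F}) = \pi_0(K^-_F) = \lic{L}{M}$ by the very definition of liftable integral closure. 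Therefore the whole intersection lies inside this one term, i.e.\ inside $\lic{L}{M}$.

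The last containment, $\lic{L}{M} \subseteq \pi(\Ric{(L^\pi)}{M^\pi})$ for every surjection $\pi\colon Z \onto M$, is the only step requiring more than bookkeeping. I would lift $\pi$ through a free module: choose a surjection $p\colon F \onto Z$ with $F$ free (finite free if $Z$, hence $M$, is finite). Then $\pi p\colon F \onto M$ is a free surjection and $(\pi p)^{-1}(L) = p^{-1}(L^\pi) =: K$, so the definition of liftable integral closure together with its independence of the chosen surjection (Proposition~\ref{pr:indep}) gives $\lic{L}{M} = (\pi p)(K^-_F) = \pi\bigl(p(K^-_F)\bigr) = \pi\bigl(\lic{(L^\pi)}{Z}\bigr)$, the last equality being the definition of $\lic{(L^\pi)}{Z}$ via the surjection $p$. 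Now Proposition~\ref{pr:list}(\ref{it:Ric}) gives $\lic{(L^\pi)}{Z} \subseteq \Ric{(L^\pi)}{Z}$, whence $\lic{L}{M} \subseteq \pi(\Ric{(L^\pi)}{M^\pi})$; intersecting over $\pi$ closes the circle.

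The only delicate point is in this last step: one must check that the two invocations of the definition of liftable integral closure are legitimate — once for $L \subseteq M$ through the composite $\pi p$, once for $L^\pi \subseteq Z$ through $p$ — which is exactly what Proposition~\ref{pr:indep} licenses, and one must make sure the auxiliary free cover $F$ can be taken finite in the ``resp.'' case, which it can since $R$ is Noetherian and a finite module admits a finite free surjection. Everything else is a direct application of results already proved.
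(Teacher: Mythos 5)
Your proof is correct and follows essentially the same route as the paper's: both reduce to surjections from free modules by factoring an arbitrary $\pi\colon Z\onto M$ through a free cover and then invoke the definition of $\lic{L}{M}$ together with Proposition~\ref{pr:indep}. The only cosmetic difference is that you package the key step as the comparison $\lic{(L^\pi)}{Z}\subseteq \Ric{(L^\pi)}{Z}$ from Proposition~\ref{pr:list}, whereas the paper cites the functoriality $g(\Ric{A}{B})\subseteq\Ric{g(A)}{C}$ directly; these amount to the same thing.
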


\begin{proof}
Since for any $R$-modules $A \subseteq B$ and any $R$-module map $g: B \ra C$, we have $g(\Ric{A}{B}) \subseteq \Ric{g(A)}{C}$ (since it holds after reducing to the domain case) and $g(\EHUic{A}{B}) \subseteq \EHUic{g(A)}{C}$ (as this operation is defined in terms of maps between modules), and since every (finite) $R$-module is the surjective image of a (finite) free $R$-module, we need only consider surjections from (finite) free modules.  Then the result follows by definition of liftable integral closure and Proposition~\ref{pr:indep}.
\end{proof}

That is, an element $z \in M$ is \emph{liftably integral} over $L$ in $M$ if for all `liftings' $\pi$ of $M$, a preimage of $z$ is in the (Rees or EHU) integral closure of $L^\pi$ in $M^\pi$, and the \emph{liftable integral closure} of $L$ in $M$ is the set of all such elements.

\section{`Test ideals' for liftable integral closure}\label{sec:test}

A main point of this work is to determine what kills liftable integral closures of zero in some `universal' way, inspired by the tight closure notion of ``test elements".

\begin{defpr}\label{pr:test}
Let $(R,\m,k,E)$ be a local Noetherian ring.  Then \begin{enumerate}
\item
\begin{align*}
\tmod &:= \bigcap_{M \text{ artinian}} \ann \lic{0}{M} = \bigcap_{M \text{ fin.\ length}} \ann \lic{0}{M} \\
&= \bigcap_{M \text{ fin.\ gen.}} \ann \lic{0}{M} = \ann \lic{0}{E},
\end{align*}
where the intersections in question are taken over all $R$-modules of the specified types.

\item \[
\tid :=\bigcap_{\text{ideals }I} (I:\bar{I}) =  \bigcap_{\m\text{-primary ideals }I} (I : \bar{I}).
\]

\item $\displaystyle\tid \subseteq  \bigcap_{\m\text{-primary param.\ ideals }J} (J:\bar{J})$, with equality if $k$ is infinite.

\item $\tmod \subseteq \tid$ with equality if $\dim R=0$ or $R$ is approximately Gorenstein.

Whenever $\tmod=\tid$, we call it the \emph{integral test ideal} of $R$.
\end{enumerate}
\end{defpr}

Recall \cite{Ho-purity-nono} that a local ring $(R, \m, k,E)$ is \emph{approximately Gorenstein} if there exists a sequence $\{I_t\}_{t \in \N}$ of irreducible $\m$-primary ideals such that for all $N \in \N$, there exists $t\in \N$ such that $I_t \subseteq \m^N$.  Moreover, we have
\begin{thm}\cite{Ho-purity-nono}
Let $(R,\m,k,E)$ be a local Noetherian ring.  $R$ is approximately Gorenstein if and only if $\hat{R}$ is.  If $R$ is approximately Gorenstein, then there is a sequence $\{I_t\}_{t \in \N}$ of irreducible $\m$-primary ideals and a sequence $i_t: R/I_t \into R/I_{t+1}$ of \emph{injective} $R$-linear maps such that $E \cong \displaystyle \lim_{\rightarrow} (R/I_t)$ under the action of these maps.

Suppose that one of the following conditions holds:
\begin{enumerate}
 \item $\hat{R}$ is reduced.
 \item $R$ is excellent and reduced.
 \item $\depth R \geq 2$.
 \item $R$ is a normal domain.
\end{enumerate}
Then $R$ is approximately Gorenstein.
\end{thm}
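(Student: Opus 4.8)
The three assertions have quite different flavors; the first two are Matlis-duality bookkeeping, while the substance lies in the sufficient conditions. I would treat them in turn. \emph{The completion equivalence.} Put $E = E_R(k)$; this is also the injective hull of $k$ over $\hat{R}$, and it is $\m$-power torsion, so the $R$-submodules and $\hat{R}$-submodules of $E$ coincide (on $(0 :_E \m^n)$ both rings act through $R/\m^n \cong \hat{R}/\m^n\hat{R}$). For an $\m$-primary ideal $I$ of $R$, the rule $I \mapsto (0 :_E I)$ is an inclusion-reversing bijection onto the finite-length submodules of $E$, with inverse $N \mapsto (0 :_R N)$, and the same holds over $\hat{R}$. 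Since $R/I$ is already complete, $R/I \cong \hat{R}/I\hat{R}$, hence $\ann_{\hat{R}}(0 :_E I) = I\hat{R}$ and $\Soc(R/I) \cong \Soc(\hat{R}/I\hat{R})$; moreover $I \subseteq \m^N$ if and only if $I\hat{R} \subseteq \hat{\m}^N$, because $\m^N\hat{R} \cap R = \m^N$. Thus $I \mapsto I\hat{R}$ is a bijection between the irreducible $\m$-primary ideals of $R$ and those of $\hat{R}$ preserving cofinality with respect to powers of the maximal ideal, and the equivalence follows.

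\emph{The direct-limit description.} The key reformulation is that $R$ is approximately Gorenstein if and only if every $\m$-primary ideal of $R$ contains an irreducible $\m$-primary ideal (given a cofinal sequence of irreducibles, any $\m$-primary $J$ contains some $\m^N$, hence a member of the sequence; conversely each $\m$-primary ideal $\m^N$ contains an irreducible $\m$-primary ideal, and these form a cofinal sequence). If $I$ is irreducible and $\m$-primary, then $R/I$ is Artinian Gorenstein, hence self-injective, so $(0 :_E I)$ is the injective hull of $k$ over $R/I$, hence $\cong R/I$: a \emph{cyclic} finite-length submodule of $E$ with annihilator $I$. So the reformulated condition says precisely that every finite-length submodule of $E$ lies inside a cyclic one. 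Granting this, build a chain $C_1 \subseteq C_2 \subseteq \cdots$ of cyclic submodules of $E$ inductively: having $C_n$, engulf the finite-length module $C_n + (0 :_E \m^n)$ inside a cyclic submodule $C_{n+1}$. Then $\bigcup_n C_n \supseteq \bigcup_n (0 :_E \m^n) = E$; each inclusion $C_n \hookrightarrow C_{n+1}$, under fixed isomorphisms $C_n \cong R/I_n$ with $I_n := \ann_R C_n$, becomes an injective $R$-linear map $R/I_n \to R/I_{n+1}$; each $I_n$ is irreducible because a nonzero submodule of $E$ has simple socle; and $I_n \subseteq \m^{n-1}$ since $C_n \supseteq (0 :_E \m^{n-1})$, so the $I_n$ are cofinal. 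Hence $E = \varinjlim C_n \cong \varinjlim R/I_n$, as required; conversely such a presentation of $E$ visibly exhibits a cofinal sequence of irreducible $\m$-primary ideals.

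\emph{The sufficient conditions: reductions.} By the completion equivalence, (2) reduces to (1), and (4) reduces to (3) together with the trivial cases $\dim R \le 1$ (a normal domain of dimension $0$ or $1$ is a field or a discrete valuation ring, each approximately Gorenstein), since a normal domain of dimension at least $2$ has depth at least $2$ by Serre's criterion. So it suffices to prove (1) and (3), and by the completion equivalence we may assume $R$ complete; by the reformulation above the task becomes: for every $N$, produce an irreducible $\m$-primary ideal contained in $\m^N$. Both hypotheses force $R$ to have no associated prime of dimension $\le 1$: a reduced local ring that is not a field has no embedded primes and no $0$-dimensional minimal prime, while $\depth R \ge 2$ gives $H^0_{\m}(R) = H^1_{\m}(R) = 0$ --- the first vanishing rules out $0$-dimensional associated primes, and the second rules out $1$-dimensional ones (embedding $R/\p$ into $R$ for a hypothetical $1$-dimensional associated prime $\p$, the long exact sequence in local cohomology would force the non-finitely-generated module $H^1_{\m}(R/\p)$ to be a quotient of a finite-length module).

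\emph{The construction, and the main obstacle.} Granting the absence of small associated primes, I would build the required ideal by a dimension-reduction argument on a canonical module (or, in the non-Cohen--Macaulay case, the dualizing complex) $\omega$ of $R$: choose elements of $\m^N$ sufficiently general with respect to the finitely many associated primes of $R$ and of the successive quotients, and cut $\omega$ down step by step until one reaches an Artinian Gorenstein quotient $R/I$ with $I \subseteq \m^N$, the ``no associated primes of dimension $\le 1$'' hypothesis being exactly what keeps this descent from stalling at the last steps. This construction --- exhibiting irreducible $\m$-primary ideals arbitrarily deep in $\m$, and verifying that reducedness (respectively $\depth R \ge 2$) really suffices for it --- is where the real work lies, and is the step I expect to be the main obstacle; everything preceding it is formal Matlis-duality manipulation.
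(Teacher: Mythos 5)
The paper does not prove this theorem; it is quoted verbatim from Hochster's \emph{Cyclic purity versus purity in excellent Noetherian rings} and cited as such, so there is no internal proof to compare against. Judged on its own terms, your first two paragraphs (the equivalence with $\hat R$ and the direct-limit presentation of $E$) are correct and are exactly the standard Matlis-duality arguments: irreducible $\m$-primary ideals correspond to cyclic finite-length submodules of $E$ because $R/I$ is self-injective when Artinian with simple socle, and the engulfing construction produces the injective system $R/I_t \into R/I_{t+1}$.

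The treatment of the sufficient conditions, however, contains a genuine error beyond the gap you acknowledge. You assert that both hypotheses force $R$ to have no associated prime of dimension $\le 1$, and you build the final construction on that. For $\depth R \ge 2$ your local cohomology argument does establish this. But for a reduced (complete) local ring it is false: a DVR, or $k[\![x,y]\!]/(xy)$, or $k[\![x,y,z]\!]/(xz,yz)$, is reduced and has associated primes of dimension exactly $1$. Your own justification in that case only rules out $0$-dimensional associated primes, which is strictly weaker than what the construction needs. This is not a repairable detail of bookkeeping: Hochster's actual criterion for a complete local ring of positive dimension is $\depth R \ge 1$ together with a nontrivial condition on the $1$-dimensional associated primes that may well be present, and the substance of the reduced case is verifying that condition (e.g.\ $k[\![x,y]\!]/(xy)$ is approximately Gorenstein via the irreducible ideals $(x^t - y^t)$, not because $1$-dimensional associated primes are absent). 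So the route you propose cannot establish conditions (1) and (2), and the concluding ``descent on $\omega$'' paragraph is, as you say yourself, only a sketch in any case.
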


Thus, for rings of dimension $\geq 2$, the condition is quite general.

\begin{proof}[Proof of Proposition~\ref{pr:test}]
We will start with the $\tmod$ equalities (that is, part (1)).  For the purposes of the proof, let $\displaystyle \tau_1 := \bigcap_{M \text{ artinian}} \ann \lic{0}{M}$, $\displaystyle \tau_2 := \bigcap_{M \text{ fin.\ length}} \ann \lic{0}{M}$, $\displaystyle \tau_3 := \bigcap_{M \text{ fin.\ gen.}} \ann \lic{0}{M}$, and $\tau_4 := \ann \lic{0}{E}$.

$\tau_1 \subseteq \tau_4$: since $E$ is artinian.

$\tau_4 \subseteq \tau_2$: Let $M$ be a finite length $R$-module, and $G := E_R(M)$, the injective hull of $M$.  Then $G$ is isomorphic to a direct sum of finitely many copies of $E$, say $G := E^n$.  Then $\lic{0}{M} \subseteq \lic{0}{G} \cong (\lic{0}{E})^{\oplus n}$ (by Proposition~\ref{pr:list} (\ref{it:directsum})), and hence any element of $R$ that annihilates $\lic{0}{E}$ must annihilate $\lic{0}{M}$ as well.

$\tau_2 \subseteq \tau_1$: Let $N$ be an artinian $R$-module and $z\in \lic{0}{N}$.  As $N$ is the directed union of its finite length submodules, it follows from Lemma~\ref{lem:fin} that $z\in \lic{0}{M}$ for some finite length submodule $M \subseteq N$.  Since any $c\in \tau_2$ annihilates $\lic{0}{M}$, we have $c z = 0$.

At this point we have shown that $\tau_1 = \tau_2 = \tau_4$.  To complete the proof of the equalities for $\tmod$, we need only show that $\tau_2=\tau_3$.

$\tau_3 \subseteq \tau_2$: since every finite length module is finitely generated.

$\tau_2 \subseteq \tau_3$: Let $N$ be a finitely generated $R$-module and let $c\in \tau_2$.  Let $z\in \lic{0}{N}$.  Since $\lic{}{}$ respects inclusions (by Proposition~\ref{pr:list} (\ref{it:submods})), we have $z\in \lic{(\m^t N)}{N}$ for all integers $t\geq 1$.  Let $N_t := N / \m^t N$.  Since $N_t$ has finite length, it follows that $c$ annihilates $\lic{0}{N_t}$, so that $c \cdot \bar{z} = \bar{0}$ in $N_t$.  In other words, $cz \in \m^t N$ for all $t$, \emph{i.e.} $cz \in \bigcap_{t\geq 1} \m^t N = 0$ by the Krull intersection theorem.  

We proceed with the assertions about $\tid$ (that is, parts (2) and (3)): Let $\displaystyle \tau_5 := \bigcap_{\text{ideals }I} (I:\bar{I})$, $\displaystyle \tau_6 := \bigcap_{\m-\text{primary ideals }I} (I:\bar{I})$, and $\displaystyle \tau_7 := \bigcap_{\m-\text{prim. param. ideals }J} (J:\bar{J})$.  The proof that $\tau_5=\tau_6$ is essentially identical to the proof that $\tau_2=\tau_3$.  $\tau_6 \subseteq \tau_7$ for obvious reasons, so it remains only to see that $\tau_7 \subseteq \tau_6$ when $k$ is infinite.  But it is well known \cite{NR-nono} that in this case, every $\m$-primary ideal $I$ has a minimal reduction $J$ which is generated by a system of parameters.  So for any such $I$, any $z\in \bar{I}$, and any $c\in \tau_7$, since $\bar{I}=\bar{J}$, we have $cz \in J \subseteq I$, so $c \in (I : \bar{I})$.

As for (4): First, it is clear that $\tmod \subseteq \tid$ in all cases, since for any ideal $I$ (with closure being taken as $R$-modules), $\lic{0}{R/I} = \bar{I}/I$, so that $\ann \lic{0}{R/I} = \ann (\bar{I}/I) = (I :_R \bar{I})$.  For the other direction, if $R$ is approximately Gorenstein, then $E$ is a directed union of finite length \emph{cyclic} $R$-modules, so Lemma~\ref{lem:fin} shows that $\tid \subseteq \tmod$ in this case.  Finally, the case $\dim R = 0$ is given in Proposition~\ref{pr:dim0test} below.
\end{proof}

It is also interesting to consider top local cohomology modules. As one may expect, the results are at their cleanest when the ring is Cohen-Macaulay:

\begin{prop}
Let $(R,\m)$ be a Noetherian local ring.  Let $d:= \dim R$.  Then
\[
\tid \subseteq \ann \lic{0}{H^d_\m(R)},
\]
and equality holds whenever $R$ is Cohen-Macaulay with infinite residue field.
\end{prop}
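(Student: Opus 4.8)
The plan is to base both assertions on the standard presentation of top local cohomology as a direct limit $H^d_\m(R) = \varinjlim_t R/\bx{t}$, where $\bfx = \vect x d$ is a fixed system of parameters, $\bx{t}$ denotes $(x_1^t, \dotsc, x_d^t)$, and the transition maps $R/\bx{t} \ra R/\bx{t+1}$ are multiplication by $x_1 \cdots x_d$. Write $\eta_t \colon R/\bx{t} \ra H^d_\m(R)$ for the canonical maps and set $M_t := \im \eta_t$; these form an ascending chain of submodules of $H^d_\m(R)$ whose union is all of $H^d_\m(R)$.

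For the containment $\tid \subseteq \ann \lic{0}{H^d_\m(R)}$, I would first invoke the Reduction Lemma~\ref{lem:fin}, with $L=0$ and the submodules $M_\lambda$ ranging over the chain $\{M_t\}$, to conclude $\lic{0}{H^d_\m(R)} = \bigcup_t \lic{0}{M_t}$. The key observation is that each $M_t$, being a quotient of the \emph{cyclic} module $R/\bx{t}$, is itself cyclic: $M_t \cong R/J_t$ for some ideal $J_t \supseteq \bx{t}$, which is therefore $\m$-primary (unless $J_t = R$, in which case $M_t = 0$ and there is nothing to prove). Under this isomorphism $\lic{0}{M_t}$ corresponds to $\lic{0}{R/J_t} = \bar{J_t}/J_t$ --- computing the latter with the surjection $R \onto R/J_t$ from the rank-one free module, exactly as in the identity $\lic{0}{R/I} = \bar I/I$ used in the proof of Definition/Proposition~\ref{pr:test}. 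Since $\tid \subseteq (J_t :_R \bar{J_t})$ by Definition/Proposition~\ref{pr:test}(2), every $c \in \tid$ kills $\bar{J_t}/J_t$, hence kills each $\lic{0}{M_t}$, hence annihilates $\lic{0}{H^d_\m(R)}$.

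For the equality when $R$ is Cohen--Macaulay with infinite residue field, I would prove the reverse containment $\ann \lic{0}{H^d_\m(R)} \subseteq \tid$. By Definition/Proposition~\ref{pr:test}(3) (this is where infiniteness of $k$ enters), it suffices to show $c\bar J \subseteq J$ for every parameter ideal $J = (\vect x d)$ with $\bfx$ a system of parameters. Since $R$ is Cohen--Macaulay, such an $\bfx$ is a regular sequence, so the transition maps in $\varinjlim_t R/\bx{t}$ are injective --- this is the standard colon identity $\bx{t+1} :_R (x_1\cdots x_d) = \bx{t}$ valid for regular sequences --- and hence the first canonical map $\eta_1 \colon R/J = R/\bx{1} \into H^d_\m(R)$ is an embedding. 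By Proposition~\ref{pr:list}(\ref{it:submods}) applied to $0 \subseteq \eta_1(R/J) \subseteq H^d_\m(R)$ we get $\lic{0}{\eta_1(R/J)} \subseteq \lic{0}{H^d_\m(R)}$, while $\lic{0}{\eta_1(R/J)}$ corresponds, via the isomorphism $\eta_1$ onto its image, to $\lic{0}{R/J} = \bar J/J$. Therefore any $c$ annihilating $\lic{0}{H^d_\m(R)}$ annihilates $\bar J/J$, i.e.\ $c\bar J \subseteq J$; as $J$ was an arbitrary parameter ideal, $c \in \tid$.

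I expect most of the work to be routine bookkeeping: setting up the direct-limit presentation of $H^d_\m(R)$, verifying that the Reduction Lemma applies to the chain $\{M_t\}$, and justifying the identification of $\lic{0}{M_t}$ with $\bar{J_t}/J_t$. The single genuinely essential input --- and the only place the Cohen--Macaulay hypothesis is used --- is the injectivity of the transition maps $R/\bx{t} \ra R/\bx{t+1}$ for a parameter sequence that is regular; dropping Cohen--Macaulayness one still gets the containment, but can no longer embed $R/J$ into $H^d_\m(R)$, so the reverse containment may fail.
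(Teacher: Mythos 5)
Your proof is correct and follows essentially the same route as the paper: the Reduction Lemma applied to the chain of cyclic images $M_t$ for the containment, and the embedding $R/J \hookrightarrow H^d_\m(R)$ (valid since a system of parameters is a regular sequence in the Cohen--Macaulay case) combined with Definition/Proposition~\ref{pr:test}(3) for the reverse inclusion. The only cosmetic difference is in the first part: the paper multiplies the representative by $(x_1\cdots x_d)^n$ to land in an honest parameter-power ideal before applying the definition of $\tid$, whereas you apply $\tid \subseteq (J_t : \bar{J_t})$ directly to the kernel ideal $J_t$, which is equally valid since $J_t$ is $\m$-primary (or all of $R$).
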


\begin{proof}
As is well-known, whenever $x_1, \dotsc, x_d$ is a system of parameters, we can express the module $H^d_\m(R)$ as a direct limit, namely
\[
 H^d_\m(R) = \lim_{\ra} \frac{R}{(x_1^t, \dotsc, x_d^t)}
\]
where the maps are given by multiplication by the element $x = \prod_{i=1}^d x_i$. To be explicit, let $j_t: R/(x_1^t, \dotsc, x_d^t) \ra H^d_\m(R)$ and $\alpha_{t,n}: R/(x_1^t, \dotsc, x_d^t) \ra R/(x_1^{t+n}, \dotsc, x_d^{t+n})$ be the corresponding maps, and let $\pi_t: R \ra R/(x_1^t, \dotsc, x_d^t)$ be the natural projection map.

To prove the first inclusion, let $a\in \tid$ and $0\neq u\in \lic{0}{H^d_\m(R)}$.  For each $t$, let $\kappa_t := \ker (j_t \circ \pi_t)$.  By Lemma~\ref{lem:fin}, there is some $t$ such that $u \in \lic{0}{j_t(R/(x_1^t, \dotsc, x_d^t))}$.  That is, there is some $y\in R$ such that $y + (x_1^t, \dotsc, x_d^t)$ represents $u$ in the direct limit system and $y \in \overline{\kappa_t}$.  But for some $n$, we have $x^n \kappa_t \subseteq (x_1^{n+t}, \dotsc, x_d^{n+t})$, so that $x^n y \in \overline{(x_1^{n+t}, \dotsc, x_d^{n+t})}$ by functoriality of integral closure of ideals.  Since $a$ annihilates the integral closure of ideals, this implies that $a x^n y \in (x_1^{n+t}, \dotsc, x_d^{n+t})$. But $x^n y + (x_1^{n+t}, \dotsc, x_d^{n+t})$ represents $u$ in the direct limit system, so it follows that $au=0$.  Thus, $a\in \ann \lic{0}{H^d_\m(R)}$.

As for the reverse inclusion (assuming $R$ is Cohen-Macaulay with infinite residue field), choose $a\in \ann \lic{0}{H^d_\m(R)}$ and let $J$ be an $\m$-primary parameter ideal. Since $R$ is Cohen-Macaulay, we have an inclusion $R/J \into H^d_\m(R)$, which in turn induces an inclusion $\bar{J}/J \into \lic{0}{H^d_\m(R)}$, since as $R$-modules we have $\lic{0}{R/J} = \bar{J}/J$. But then since $a$ annihilates $\lic{0}{H^d_\m(R)}$, it annihilates the submodule $\bar{J}/J$, so that $a \in (J :_R \bar{J})$.  Since this holds for all $\m$-primary parameter ideals $J$, the result follows from Proposition~\ref{pr:test}.
\end{proof}

\section{Characterizations of $\tid$ and $\tmod$}\label{sec:char}

Next, we characterize $\tid$ (and sometimes $\tmod$) under certain conditions on the local ring $R$.

\subsection*{Dimension zero} We begin with the following observation.

\begin{prop}\label{pr:dim0lic}
Let $(R,\m)$ be a Noetherian local ring.  The following are equivalent:
\begin{enumerate}
 \item $\dim R=0$.
 \item For every finitely generated $R$-module $M$, $\lic{0}{M} = \m M$.
 \item For every $R$-module inclusion $L \subseteq M$ such that $M/L$ is finitely generated, $\lic{L}{M} = L + \m M$.
\end{enumerate}
\end{prop}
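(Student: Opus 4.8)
The plan is to handle $(2)\Leftrightarrow(3)$ formally and then close the loop with $(1)\Rightarrow(2)$ and $(2)\Rightarrow(1)$. The structural fact behind $(2)\Leftrightarrow(3)$ is that liftable integral closure depends only on the quotient module: if $q: M \onto M/L$ is the canonical surjection, then $\lic{L}{M} = q^{-1}(\lic{0}{M/L})$. Indeed, picking a free surjection $\pi: F \onto M$ and setting $K := \pi^{-1}(L)$, the composite $q\pi: F \onto M/L$ is again a free surjection with kernel $K$, so $\lic{0}{M/L} = q\pi(K^-_F) = q(\lic{L}{M})$; since also $\lic{L}{M} \supseteq \pi(K) = L = \ker q$, applying $q^{-1}$ gives the formula. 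Because $q^{-1}(\m(M/L)) = L + \m M$, condition (3) for an inclusion $L \subseteq M$ is precisely condition (2) for the finitely generated module $N := M/L$, and as $(L \subseteq M)$ ranges over all such inclusions these $N$ range over all finitely generated modules (take $L = 0$). Hence $(2)\Leftrightarrow(3)$; note in particular that it does no harm that (3) only assumes $M/L$, not $M$, finitely generated.

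For $(1)\Rightarrow(2)$, I would assume $\dim R = 0$, so that $\m$ is the nilradical of $R$ and hence $\m^n = 0$ for some $n \geq 1$. Let $M$ be finitely generated, fix a finite free surjection $\pi: F \onto M$, and put $U := \ker \pi$, so $\lic{0}{M} = \pi(U^-_F)$. The containment $\lic{0}{M} \subseteq \m M$ is the Nakayama property, Proposition~\ref{pr:list} (\ref{it:Nak}). For the reverse containment I would show $\m F \subseteq U^-_F$: any $v \in \m F$ lies in $\m \cdot \Sym(F)$, so $v^n \in \m^n \Sym(F) = 0$; thus $v$ is nilpotent in $\Sym(F)$, hence integral over the subring of $\Sym(F)$ generated by $U$ in degree $1$ (the equation $v^n = 0$ suffices), which means $v \in U^-_F$. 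Applying $\pi$ yields $\m M = \pi(\m F) \subseteq \pi(U^-_F) = \lic{0}{M}$, and equality follows.

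For $(2)\Rightarrow(1)$, it suffices to test condition (2) on the module $M = R$. Recall from the discussion following Proposition~\ref{pr:indep} that $\lic{0}{R} = \bar 0$, and $\bar 0 = \sqrt 0$ since $r \in \bar 0$ forces $r^n = 0$. Since $\m R = \m$, hypothesis (2) gives $\sqrt 0 = \m$, i.e.\ $\m$ is nilpotent; as $R$ is Noetherian and local this forces $\dim R = 0$.

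The argument has no serious obstacle. The only points to watch are the bookkeeping in $(2)\Leftrightarrow(3)$ and the (retrospectively obvious) observation that it is enough to test condition (2) on $M = R$ to recover $\dim R = 0$. The sole computational input, in $(1)\Rightarrow(2)$, is the one-line remark that nilpotence of $\m$ makes every element of $\m F$ nilpotent in $\Sym(F)$, hence liftably integral over zero.
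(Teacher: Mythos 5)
Your proof is correct and follows essentially the same route as the paper's: the equivalence of (2) and (3) via the observation that liftable integral closure only depends on the quotient, the implication $(1)\Rightarrow(2)$ via the nilpotence of elements of $\m F$ inside $\Sym(F)$ together with the Nakayama property for the reverse containment, and $(2)\Rightarrow(1)$ by testing on $M=R$ where $\lic{0}{R}=\sqrt{0}$. Your write-up merely supplies more detail than the paper does (notably for $(2)\Leftrightarrow(3)$, which the paper dismisses as clear), and all the details check out.
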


\begin{proof}
It is clear that (2) and (3) are equivalent.

To see that (2) $\implies$ (1), suppose $\dim R \geq 1$.  Then letting $M=R$, we see that
\[
 \lic{0}{M} = 0^-_R = \sqrt{0} \neq \m = \m M.
\]

To see that (1) $\implies$ (2), let $\dim R=0$ and let $M$ be a finite $R$-module.  Let $\pi: F \onto M$ be a surjection from a finite free module.  Say $F = \oplus_{i=1}^t R \be_i$, where the $\be_i$ are free module generators.  Then
\[
 \m F = \bigoplus_{i=1}^t \m \be_i = \bigoplus_{i=1}^t (0^-_R) \be_i \subseteq 0^-_F \subseteq \m F, 
\]
so that all inequalities become equalities and $\m F = 0^-_F$. Then by Proposition~\ref{pr:list} (\ref{it:Nak}), 
\[
 \m M = \pi(\m F) = \pi(0^-_F) \subseteq \lic{0}{M} \subseteq \m M
\]
which finishes the proof.
\end{proof}

\begin{prop}\label{pr:dim0test}
Let $(R,\m,k,E)$ be a Noetherian local ring of dimension $0$.  Then $\tid = \tmod = \Soc R$.
\end{prop}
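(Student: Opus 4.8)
The plan is to sandwich everything between two copies of $\Soc R$ via the chain
\[
\Soc R \subseteq \tmod \subseteq \tid \subseteq \Soc R .
\]
The middle containment $\tmod \subseteq \tid$ is already established in Definition/Proposition~\ref{pr:test}(4) by an argument insensitive to the dimension (only the \emph{equality} in the zero-dimensional case was deferred to here), so it can be cited directly. Hence the task reduces to the two outer containments, and neither is difficult: one should recall that $R$, being Noetherian local of dimension $0$, is Artinian, so in particular $E$ has finite length and is finitely generated.

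For the lower containment $\Soc R \subseteq \tmod$, I would take an arbitrary finite length $R$-module $M$. Since $M$ is finitely generated, the Nakayama property, Proposition~\ref{pr:list}~(\ref{it:Nak}), gives $\lic{0}{M} \subseteq \m M$; and every element of $\Soc R = (0 :_R \m)$ kills $\m M$, hence kills $\lic{0}{M}$. Intersecting over all finite length $M$ gives $\Soc R \subseteq \tmod$. (Equivalently, one may apply this to the single module $M = E$ and use the identity $\tmod = \ann \lic{0}{E}$.)

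For the upper containment $\tid \subseteq \Soc R$, I would specialize the defining intersection $\tid = \bigcap_{\text{ideals } I}(I :_R \bar I)$ to the ideal $I = (0)$. In an Artinian local ring the integral closure of $(0)$ is the nilradical, which equals $\m$; therefore $(0 :_R \bar 0) = (0 :_R \m) = \Soc R$, and so $\tid \subseteq \Soc R$. Combining the three containments yields $\tid = \tmod = \Soc R$.

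I do not expect a genuine obstacle here: the proof is a matter of invoking the right special cases — the zero ideal for the $\tid$ bound, the Nakayama property for the $\tmod$ bound — rather than of overcoming a difficulty. The only points that want a line of justification are the identity $\bar 0 = \m$ in an Artinian local ring (this is what makes the zero ideal the useful test case) and the finite generation of the modules involved, both routine. One could alternatively compute $\tmod = \ann_R(\m E) = \Soc R$ directly via Matlis duality, using $\lic{0}{E} = \m E$ from Proposition~\ref{pr:dim0lic}, but that is heavier machinery than the situation requires.
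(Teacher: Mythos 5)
Your proof is correct, and it takes a slightly different route from the paper's for the $\tmod$ half. The paper computes $\tmod$ head-on: it invokes Proposition~\ref{pr:dim0lic} to get $\lic{0}{E}=\m E$, then uses that $R$ is Artinian (hence complete) and $E=\omega_R$ is faithful to conclude $\ann(\m E)=\ann\m=\Soc R$ via Matlis duality; it then separately computes $\tid=\Soc R$ by observing $\bar I=\m$ for every proper ideal $I$, so $(I:\bar I)\supseteq(0:\m)$ with equality at $I=(0)$. You instead sandwich, $\Soc R\subseteq\tmod\subseteq\tid\subseteq\Soc R$: the middle containment is legitimately citable from Definition/Proposition~\ref{pr:test}(4), since only the reverse inclusion in dimension zero was deferred to this proposition, so there is no circularity; the lower bound follows from the Nakayama property $\lic{0}{M}\subseteq\m M$ together with $(\Soc R)\cdot\m M=0$; and the upper bound is the paper's own observation $(0:\bar 0)=(0:\m)=\Soc R$, specialized to the single ideal $I=(0)$. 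What your route buys is economy: it needs neither Proposition~\ref{pr:dim0lic} nor the canonical module, faithfulness of $E$, or duality. What the paper's route buys is the sharper information that $\lic{0}{E}=\m E$ and that \emph{every} colon $(I:\bar I)$ with $I$ proper contains the socle, not just that the total intersection equals it. Both are complete proofs.
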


\begin{proof}
We have $E=\omega_R$, the \emph{canonical module}, which is finitely generated and faithful.  Thus, by Proposition~\ref{pr:dim0lic}, $\tmod = \ann \lic{0}{E} = \ann\, (\m E)$.  But since $R$ is complete, $\ann\, (\m E) =\ann \m = \Soc R$, so $\tmod =\Soc R$.  On the other hand, for any proper ideal $I$, $(I : \bar{I}) = (I : \m) \supseteq (0:\m) = \Soc R$, and $(0 : \bar{0}) = \ann \m = \Soc R$, so $\tid = \Soc R$.
\end{proof}

\subsection*{Dimension one}
To deal the dimension 1 case, we define the \emph{conductor} $\cond{R}$ of a Noetherian ring $R$ to be the ideal $\cond{R} = (R :_R \bar{R})$, where $\bar{R}$ is the integral closure of $R$ in its total ring of fractions.  This agrees with the usual definition (e.g. in \cite[Chapter 12]{HuSw-book}) when $R$ is reduced.

Next, we note the following:

\begin{prop}
Let $(R,\m)$ be a Noetherian local ring.  Then\footnote{Recall in particular the convention that $\grade R = \infty$.} $\grade \cond{R} \geq 1$ if and only if $\bar{R}$ is finitely generated as an $R$-module.
\end{prop}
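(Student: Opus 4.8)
The plan is to prove the two implications directly. The key translation is that, since $R$ is Noetherian, $\grade \cond{R} \geq 1$ holds if and only if the conductor ideal $\cond{R} = (R :_R \bar{R})$ contains a nonzerodivisor of $R$ — this includes the case $\cond{R} = R$, where $1 \in \cond{R}$ is a nonzerodivisor and $\grade \cond{R} = \infty$ by convention. I will also use the standard description of the total ring of fractions $Q(R)$ as $W^{-1}R$, where $W$ is the (multiplicatively closed) set of nonzerodivisors of $R$, so that every element of $Q(R) \supseteq \bar{R}$ has the form $a/w$ with $a \in R$ and $w \in W$, and every such $w$ is a nonzerodivisor in $Q(R)$.

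For the direction ``$\bar{R}$ finite $\Rightarrow \grade \cond{R} \geq 1$'': choose $R$-module generators $x_1, \dots, x_n$ of $\bar{R}$, write $x_i = a_i/w_i$ with $a_i \in R$ and $w_i \in W$, and set $w := w_1 \cdots w_n \in W$. Then $w x_i \in R$ for all $i$, and since the $x_i$ generate $\bar{R}$ over $R$, it follows that $w\bar{R} \subseteq R$; that is, $w \in \cond{R}$. As $w$ is a nonzerodivisor, $\grade \cond{R} \geq 1$.

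For the direction ``$\grade \cond{R} \geq 1 \Rightarrow \bar{R}$ finite'': pick a nonzerodivisor $c \in \cond{R}$. Then $c\bar{R} \subseteq R$, so multiplication by $c$ defines an injective $R$-linear map $\bar{R} \to R$ (injective because $c$ remains a nonzerodivisor in $Q(R)$). Hence $\bar{R}$ is isomorphic to an $R$-submodule of $R$, and since $R$ is Noetherian this submodule — and therefore $\bar{R}$ — is finitely generated. (When $\cond{R} = R$ one may take $c = 1$, so that $\bar{R} = 1 \cdot \bar{R} \subseteq R$ and $\bar{R} = R$ is trivially finite.)

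I do not anticipate a genuine obstacle: the argument is a clearing-of-denominators computation in one direction and the Noetherian property in the other. The only points deserving care are the equivalence between $\grade \cond{R} \geq 1$ and the existence of a nonzerodivisor in $\cond{R}$ — where the convention $\grade R = \infty$ must be invoked for the edge case $R = \bar{R}$ — and the fact that a common denominator of finitely many elements of $Q(R)$ can be taken inside $W$, hence is again a nonzerodivisor.
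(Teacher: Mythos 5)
Your proof is correct and follows essentially the same route as the paper: clearing denominators to produce a nonzerodivisor in the conductor for one direction, and using a nonzerodivisor $c\in\cond{R}$ to embed $\bar{R}$ into $R$ (the paper phrases this as $c\bar{R}$ being an ideal, hence finitely generated, then dividing by $c$) together with the Noetherian hypothesis for the other.
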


\begin{proof}
Let $Q$ be the total quotient ring of $R$.

First suppose $\bar{R}$ is finitely generated as an $R$-module.  Say $\bar{R} = \sum_{i=1}^t R z_i$, with each $z_i \in Q$.  We have $z_i = a_i / x_i$, where $a_i \in R$ and $x_i$ is a non-zerodivisor of $R$.  Let $x := \prod_i x_i$.  Then $x$ is a non-zerodivisor of $R$, and $x z_i \in R$ for each $i$, whence $x \bar{R} \subseteq R$, so that $x \in \cond{R}$, giving that ideal positive grade.

Conversely, suppose $\grade \cond{R}\geq 1$.  Then $\cond{R}$ contains a non-zerodivisor, say $x$.  Then $x \bar{R}$ is an $R$-submodule of $R$, hence an ideal, hence finitely generated.  Say $x \bar{R} = (a_1, \cdots, a_t)R$.  Then
\[
 \bar{R} = \sum_{i=1}^t R \cdot \frac{a_i}{x}
\]
is finitely generated as an $R$-module.
\end{proof}

\begin{thm}\label{thm:dim1test}
Let $(R,\m,k,E)$ be a Noetherian local ring of dimension one, with infinite residue field.  Then $R$ is Cohen-Macaulay if and only if $\tid = \cond{R}$; otherwise $\cond{R}=R$ and $\tid$ is proper.
\end{thm}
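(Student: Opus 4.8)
The plan is to split into two cases according to whether $R$ is Cohen-Macaulay, and to dispatch the non-Cohen-Macaulay case first since it is short. Suppose $R$ is not Cohen-Macaulay; since $\dim R = 1$ this forces $\depth R = 0$, i.e. $\m \in \Ass R$. First I would observe that $R$ cannot be reduced: in a reduced ring $\Ass R = \Min R$, while $\m \notin \Min R$ because $\dim R = 1 > 0$, so $\depth R \geq 1$, a contradiction. Hence the nilradical $\sqrt{0} = \overline{0}$ is a nonzero ideal, so the zero ideal fails to be integrally closed and $\tid \subseteq (0 :_R \overline{0}) = \ann_R(\sqrt{0}) \subsetneq R$; thus $\tid$ is proper. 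On the other hand, $\depth R = 0$ forces every non-zerodivisor of $R$ to lie outside $\m$, hence to be a unit, so the total quotient ring of $R$ is $R$ itself; therefore $\bar{R} = R$ and $\cond{R} = (R :_R R) = R$. This establishes the ``otherwise'' clause, and by contraposition it also gives the implication $\tid = \cond{R} \Rightarrow R$ Cohen-Macaulay, since in the non-Cohen-Macaulay case $\cond{R} = R \neq \tid$.

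It remains to prove that $\tid = \cond{R}$ when $R$ is Cohen-Macaulay (so $\depth R = 1$) and $k$ is infinite; the forward implication of the stated equivalence is exactly this. Since $k$ is infinite, Definition/Proposition~\ref{pr:test}(3) reduces the problem to showing that $\bigcap_x ((x) :_R \overline{(x)}) = \cond{R}$, the intersection running over all $\m$-primary parameter ideals $(x)$. By the usual structure of one-dimensional Cohen-Macaulay local rings, these are precisely the ideals $(x)$ with $x$ a non-zerodivisor in $\m$: such an $x$ avoids all of $\Ass R = \Min R$, so the only prime containing it is $\m$, whence $(x)$ is $\m$-primary. The computational heart of the argument is the identity $\overline{(x)} = R \cap x\bar{R}$ for a non-zerodivisor $x$, with $\bar{R}$ viewed inside the total quotient ring $Q$ of $R$: dividing an integral equation of $z$ over $(x)$ by $x^n$ in $Q$ yields an integral equation of $z/x$ over $R$, and conversely clearing denominators by $x^n$ recovers an integral equation of $z$ over $(x)$ with all terms in $R$.

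With this identity in hand the two inclusions are routine. For $\cond{R} \subseteq \tid$: if $c\bar{R} \subseteq R$ and $z \in \overline{(x)}$, then $z/x \in \bar{R}$, so $cz/x \in R$, i.e. $cz \in (x)$; thus $c \in ((x) :_R \overline{(x)})$ for every parameter $x$, hence $c \in \tid$. For $\tid \subseteq \cond{R}$: given $c \in \tid$ and $w \in \bar{R}$, write $w = a/b$ with $b$ a non-zerodivisor; if $b$ is a unit then $cw \in R$ trivially, and if $b \in \m$ then $b$ is a parameter and $bw = a \in R \cap b\bar{R} = \overline{(b)}$, so $c(bw) \in (b)$, and cancelling the non-zerodivisor $b$ gives $cw \in R$. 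In either case $cw \in R$, so $c\bar{R} \subseteq R$, i.e. $c \in \cond{R}$. Combining the two inclusions yields $\tid = \cond{R}$, completing the proof.

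The step I expect to be the main obstacle is the bookkeeping around the passage between ``integral closure of the ideal $(x)$'' and ``integral closure of the ring $R$ in $Q$'', and in particular the point that every element of $\bar{R}$ admits a denominator that is an \emph{$\m$-primary parameter}, not merely a non-zerodivisor. This is exactly where one-dimensionality together with the Cohen-Macaulay hypothesis (equivalently $\depth R = 1$, so that non-zerodivisors in $\m$ generate $\m$-primary ideals) is used; everything else is formal manipulation of integral equations together with the elementary facts about one-dimensional local rings invoked in the non-Cohen-Macaulay case.
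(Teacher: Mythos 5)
Your proposal is correct and follows essentially the same route as the paper's proof: the non-Cohen-Macaulay case is handled identically (non-reducedness forces $\tid \subseteq (0:\sqrt{0}) \neq R$, while $\m \in \Ass R$ forces $\bar{R}=R$), and the Cohen-Macaulay case rests on the same two pillars, namely the reduction to principal parameter ideals via Definition/Proposition~\ref{pr:test}(3) and the identity $\overline{(x)} = R \cap x\bar{R}$ for a non-zerodivisor $x$. The only cosmetic difference is that you verify $\tid \subseteq \cond{R}$ elementwise on an arbitrary $w = a/b \in \bar{R}$, whereas the paper runs the same computation through a generating set of $\bar{R}$ and colon-ideal manipulations.
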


\begin{proof}
First suppose $R$ is not Cohen-Macaulay.  Then since $\dim R=1$, we have $\depth R=0$, whence $\m\in \Ass R$.  Thus, the total ring of fractions $Q$ of $R$ is $R$ itself, whence $\bar{R} = R$.  Thus, $\cond{R} = R :_R \bar{R} = R :_R R = R$.  On the other hand, $R$ is non-reduced, so that in particular $0 \neq \sqrt{0} = (0)^-$, whence $((0) : (0)^-) \neq R$, so that $\tid = \bigcap_I (I : \bar{I}) \subseteq (0 : (0)^-) \neq R$.

Conversely, suppose $R$ is Cohen-Macaulay. By Proposition~\ref{pr:test}, we have
\begin{align*}
 \tid &= \bigcap_I (I : I^-) = \bigcap_{x \text{ parameter}} ((x) : (x)^-) \\
&= \bigcap_{x \text{ nzd}} ((x) : (x)^-) = \bigcap_{x \text{ nzd}} ((x) : (x \bar{R} \cap R)) \\
&\supseteq \bigcap_{x \text{ nzd}} (xR :_R x \bar{R}) = \bigcap_{x \text{ nzd}} (R :_R \bar{R}) = \cond{R}.
\end{align*}

Now we need only show that $\tid \subseteq \cond{R}$. Let $Q$ be the total quotient ring of $R$. There is some index set $J$ and some set of elements $\{z_j\}_{j\in J}$ of $\bar{R}$ with $\bar{R} = \sum_{j\in J} R z_j$ as an $R$-submodule of $Q$.  We have $\cond{R} = (R :_R \sum_j R z_j) = \bigcap_j (R :_R R z_j)$.  Moreover, each $z_j = a_j/x_j$ for some $a_j \in R$ and some non-zerodivisor $x_j \in R$.  Then $x_j \in (R :_R R z_j)$, so that $x_j z_j \in \bar{R} x_j \cap R = (x_j)^-$.  This means that $((x_j) :_R (x_j)^-) \subseteq ((x_j R) :_R x_j z_j) = (R :_R R z_j)$.  Putting it all together, we get
\[
\tid = \bigcap_I (I :_R I^-) \subseteq \bigcap_j ((x_j) :_R (x_j)^-) \subseteq \bigcap_j (R :_R R z_j) = \cond{R}.
\]
\end{proof}

\begin{rmk*}
A part of Theorem~\ref{thm:dim1test} was proved by Janet C. Vassilev in her doctoral thesis \cite[Theorem 3.11 and Remark 3.12]{Va-thesis} (a special case of which is mentioned in \cite[Example 3.5]{Hu-tcparam}).  Namely, she proved that if $R$ is a one-dimensional local integral domain of characteristic $p>0$ with infinite residue field, the `test ideal' $\tau(R)$ (for tight closure) equals the conductor. However, in any equicharacteristic Noetherian ring of dimension 1, tight closure is identical to integral closure, and hence $\tau(R) = \tid(R)$.  So in our terms, Vassilev showed that when $R$ is a one-dimensional integral domain of prime characteristic, $\tid = \cond{R}$, a result recoverable from our theorem since one-dimensional integral domains are Cohen-Macaulay.
\end{rmk*}

\subsection*{Higher dimension}

\begin{thm}\label{thm:hightest}
Let $(R,\m)$ be either excellent or the homomorphic image of a Gorenstein local ring.  Suppose that $\dim R \geq 2$ and that $R$ is equidimensional with no embedded primes.  Then $\tmod=\tid=0$.
\end{thm}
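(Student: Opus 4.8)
The plan is to push everything through the top local cohomology module $H^d_\m(R)$, where $d := \dim R$. First, $\tmod \subseteq \tid$ by Definition/Proposition~\ref{pr:test}(4), and $\tid \subseteq \ann_R \lic{0}{H^d_\m(R)}$ by the Proposition bounding $\tid$ in terms of $\lic{0}{H^d_\m(R)}$. Hence it suffices to show that the $R$-module $\lic{0}{H^d_\m(R)}$ is faithful, and I would establish this in two steps: first, that $\lic{0}{H^d_\m(R)} = H^d_\m(R)$, using only $d \ge 2$; and second, that $H^d_\m(R)$ is faithful, which is where the hypotheses on $R$ come in.

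For the first step, fix a system of parameters $x_1, \dots, x_d$, set $x := x_1 \cdots x_d$ and $\q^{[t]} := (x_1^t, \dots, x_d^t)$, and recall that $H^d_\m(R) = \varinjlim_t R/\q^{[t]}$ with transition maps given by multiplication by $x$; write $j_t \colon R/\q^{[t]} \to H^d_\m(R)$ for the canonical maps. The one computation needed is that $x^s \in \overline{\q^{[2s]}}$ for every $s \ge 1$: since $d \ge 2$, $(x^s)^2 = x_1^{2s} \cdots x_d^{2s}$ is a multiple of $x_1^{2s} x_2^{2s}$ and so lies in $(\q^{[2s]})^2$, giving an equation of integral dependence of $x^s$ on $\q^{[2s]}$. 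Now $j_s(\bar 1) = j_{2s}(\overline{x^s})$ in the direct limit, and $\overline{x^s}$ represents an element of $\lic{0}{R/\q^{[2s]}}$, so Proposition~\ref{pr:list} (\ref{it:functor}) and (\ref{it:submods}) give $j_s(\bar 1) \in \lic{0}{H^d_\m(R)}$. Since every element of $H^d_\m(R)$ is of the form $y \cdot j_s(\bar 1)$ for some $s$ and $y \in R$, and $\lic{0}{H^d_\m(R)}$ is a submodule, the first step follows.

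The hard part is the second step. Each hypothesis on $R$ — excellence (via Kawasaki's theorem) or being a homomorphic image of a Gorenstein local ring — equips $R$ with a normalized dualizing complex $\omega_R^\bullet$; put $K_R := H^{-d}(\omega_R^\bullet)$. By Grothendieck local duality over $\hat R$ (with $\omega_{\hat R}^\bullet = \omega_R^\bullet \otimes_R \hat R$) one has $H^d_\m(R)^\vee \cong \widehat{K_R}$, and since $K_R$ is finitely generated and Matlis duality preserves annihilators, $\ann_R H^d_\m(R) = \ann_R K_R$. So I must show $K_R$ is faithful. Since $\ann_R K_R$ is a submodule of $R$ and $R$ has no embedded primes, $\Ass(\ann_R K_R) \subseteq \Ass R = \Min R$, which coincides with $\Assh R$ by equidimensionality. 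For $\p \in \Assh R$ the ring $R_\p$ is Artinian, and — using that $R$ is catenary and equidimensional — the localized complex $(\omega_R^\bullet)_\p$ is concentrated in cohomological degree $-d$, so $(K_R)_\p \cong E_{R_\p}(\kappa(\p))$ is a faithful $R_\p$-module; thus $\p \notin \Supp(\ann_R K_R)$. Combining, $\Ass(\ann_R K_R) = \emptyset$, i.e.\ $\ann_R K_R = 0$. Putting the two steps together, $\ann_R \lic{0}{H^d_\m(R)} = \ann_R H^d_\m(R) = 0$, whence $\tmod = \tid = 0$. The obstacle, as indicated, is entirely in this second step: it is there that the excellence / Gorenstein-image hypothesis is indispensable (to produce $K_R$ at all) and that equidimensionality and the absence of embedded primes are used, to localize $\omega_R^\bullet$ at the top-dimensional minimal primes and recognize $(K_R)_\p$ as the faithful injective hull over $R_\p$; the one spot deserving care is keeping the cohomological shift of the dualizing complex straight under that localization.
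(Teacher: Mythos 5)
Your proof is correct, but it takes a genuinely different route from the paper. The paper never touches local cohomology here: it uses \cite[Propositions 2.7 and 3.8]{HH-canon} to embed $R$ into its $S_2$-ification $S$ (a module-finite extension with $\depth_\m S\geq 2$), picks $x,y\in\m$ regular on $S$, observes $x^ny^n\in \overline{(x^{2n},y^{2n})}$ from $(x^ny^n)^2=x^{2n}\cdot y^{2n}$, computes $(x^{2n},y^{2n})S:_S x^ny^n=(x^n,y^n)S$ via the regular-sequence property, and concludes $\tid\subseteq\m^nS$ for all $n$, hence $\tid=0$ by Krull intersection. Amusingly, your first step rests on the very same elementary trick --- $(x^s)^2=x_1^{2s}\cdots x_d^{2s}\in(\q^{[2s]})^2$ --- but you deploy it to prove the clean and independently interesting statement that $\lic{0}{H^d_\m(R)}=H^d_\m(R)$ for \emph{any} Noetherian local ring of dimension $\geq 2$, and you then route the hypotheses on $R$ entirely through the faithfulness of $H^d_\m(R)$, i.e.\ the vanishing of the unmixed part of $(0)$, via local duality and the canonical module $K_R$. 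Your approach dovetails with the paper's own proposition bounding $\tid$ by $\ann\lic{0}{H^d_\m(R)}$ (which the paper states but does not use for this theorem), and it isolates exactly where equidimensionality and unmixedness enter ($(K_R)_\p\cong E_{R_\p}(\kappa(\p))$ at every $\p\in\Assh R=\Ass R$). The price is heavier machinery: in the excellent case you must invoke Kawasaki's theorem to obtain a dualizing complex, where the paper needs only the $S_2$-ification. That invocation is legitimate, but you could avoid it by completing first --- excellence guarantees $\hat R$ is still equidimensional and unmixed, $H^d_\m(R)=H^d_{\hat\m}(\hat R)$ with $\ann_R H^d_\m(R)=R\cap\ann_{\hat R}H^d_{\hat\m}(\hat R)$, and $\hat R$ is a homomorphic image of a regular (hence Gorenstein) local ring by Cohen's structure theorem.
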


\begin{proof}
Under the given hypotheses, it is shown in \cite[Propositions 2.7 and 3.8]{HH-canon} that there is a ring $S$, into which $R$ embeds as a subring, such that $S$ is module-finite over $R$ and satisfies Serre's condition (S$_2$), both as an $R$-module and as a ring in its own right.

Accordingly, let $x, y \in \m$ form a regular sequence on $S$ as an $R$-module.  Fix a positive integer $n$, and let $I = (x^{2n}, y^{2n})$.  Note that $x^n y^n \in I^-$, as $(x^n y^n)^2 = (x^{2n})(y^{2n})$ gives an equation of integrality.  Also note that the fact that $x^n, y^n$ form a regular sequence on $S$ makes it easy to show that $IS :_S x^n y^n  = (x^n, y^n)S$.  So we have \begin{align*}
\tid &\subseteq (I :_R I^-) \subseteq (I :_R x^n y^n) \\
&\subseteq (IS :_S x^n y^n) = (x^n, y^n) S \subseteq \m^n S.
\end{align*}
However, $n$ was arbitrary and $S$ is a finitely generated $R$-module.  So by the Krull intersection theorem, $\tmod \subseteq \tid \subseteq \bigcap_n \m^n S = 0.$
\end{proof}

\section{Some surprising consequences}\label{sec:torsionless}

\begin{prop}\label{pr:ihull}
Let $(R,\m,k,E)$ be an excellent, Noetherian, equidimensional local ring of dimension at least $2$ that has no embedded primes.  Then $\lic 0E = E$.
\end{prop}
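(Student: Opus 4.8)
The plan is to combine Theorem~\ref{thm:hightest}, the identification $\tmod = \ann_R \lic{0}{E}$ from Definition/Proposition~\ref{pr:test}(1), and Matlis duality. Once we know that $\lic{0}{E}$ has zero annihilator, the equality $\lic{0}{E} = E$ is forced by the inclusion-reversing bijection between submodules of $E$ and ideals of the ring; since that bijection needs the ring to be complete, the first move is to reduce to the case where $R$ is complete.

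For the reduction, consider the completion map $R \ra \hat{R}$. It is flat, and since $R$ is excellent its formal fibers are geometrically regular, hence geometrically normal, so $R \ra \hat{R}$ is a normal homomorphism in the sense of Proposition~\ref{pr:list}(\ref{it:flat}). Moreover $\hat{R}$ again satisfies the hypotheses we need: it is complete local, hence a homomorphic image of a regular---a fortiori Gorenstein---local ring by Cohen's structure theorem; $\dim \hat{R} = \dim R \geq 2$; and $\hat{R}$ is again equidimensional with no embedded primes. (For the latter, geometric regularity of the formal fibers gives that $\widehat{R/\p}$ is reduced for each $\p \in \Min R$, whence $\Ass \hat{R} = \bigcup_{\p \in \Min R}\Min(\widehat{R/\p}) = \Min \hat{R}$, while universal catenarity of $R$ together with its equidimensionality forces $\hat{R}$ to be equidimensional.) Since $E$ is artinian, it is $\m$-power torsion, so $E \otimes_R \hat{R} \cong E$, and the same holds for the artinian submodule $\lic{0}{E}$; under these identifications Proposition~\ref{pr:list}(\ref{it:flat}) says exactly that the $R$-module $\lic{0}{E}$ coincides, as a submodule of $E$, with the $\hat{R}$-module $\lic{0}{E}$. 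So it is enough to treat the complete case, and we assume henceforth that $R$ is complete.

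Now, by Theorem~\ref{thm:hightest} we have $\tmod = 0$, and by Definition/Proposition~\ref{pr:test}(1) this says $\ann_R \lic{0}{E} = 0$. Since $R$ is complete local with residue field $k$ and $E = E_R(k)$ is its injective hull, Matlis duality provides an inclusion-reversing bijection between the $R$-submodules $N$ of $E$ and the ideals $I$ of $R$, given by $N \mapsto \ann_R N$ and $I \mapsto (0 :_E I)$. The submodule $\lic{0}{E}$ corresponds to the zero ideal, and the zero ideal corresponds to $(0 :_E 0) = E$; hence $\lic{0}{E} = E$.

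The one step requiring genuine care is the passage to the completion: that $\hat{R}$ is still equidimensional with no embedded primes is where excellence (rather than mere universal catenarity) is used, via the good behaviour of completion along reduced quotients and the formal equidimensionality of excellent equidimensional local rings. Everything else is formal, given the results of \S\ref{sec:char}.
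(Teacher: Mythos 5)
Your proposal is correct and follows essentially the same route as the paper: reduce to the complete case via the normal base change property of Proposition~\ref{pr:list}(\ref{it:flat}) (using that $E$ and its submodules are unchanged under $-\otimes_R\hat R$), then combine Theorem~\ref{thm:hightest} with $\tmod=\ann\lic{0}{E}$ to see that $\lic{0}{E}$ is a faithful submodule of $E$, and conclude by Matlis duality. The only cosmetic difference is that you invoke the full inclusion-reversing bijection between submodules of $E$ and ideals of $\hat R$, where the paper dualizes the exact sequence $0\ra L\ra E\ra E/L\ra 0$ by hand to show a faithful submodule of $E$ must be all of $E$.
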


\begin{proof}
First assume $R=\hat{R}$.

By Theorem~\ref{thm:hightest}, $\lic{0}{E}$ is a faithful $R$-module.  However, any faithful $R$-submodule $L$ of $E$ must equal $E$.\footnote{This particular claim can fail when $R$ is not complete. Indeed, if $J$ is any ideal of $\hat{R}$ that contracts to $0$ in $R$, let $M := (0 :_E J)$.  Then $\ann_{\hat{R}}M = J$ (so that $M \neq E$), but $\ann_R M = R \cap (\ann_{\hat{R}}M) = R \cap J = 0$, so that $M$ is faithful as an $R$-module. To see that such a $J$ can exist, let $R$ be any local domain which is not analytically irreducible, and let $J$ be any minimal prime of $\hat{R}$.  To be even more concrete, set $R := k[x,y]_{(x,y)} / (x^2-y^2-y^3)$, where $k$ is a field of characteristic $\neq 2$ and $x$, $y$ are indeterminates over $k$, and let $J := (x - y\sqrt{1+y})$.}  To see this, consider the short exact sequence \[
0 \ra L \ra E \ra E/L \ra 0.
\]
Taking the Matlis dual, we get the short exact sequence \[
0 \leftarrow R/J \leftarrow R \leftarrow J \leftarrow 0,
\]
where $J = (E/L)^\vee$ and $R/J = L^\vee$.  But then
\[
L = (R/J)^\vee = \Hom_R(R/J,E) \cong (0 :_E J),
\]
 so that $J L = J \cdot (0 :_E J) = 0$.  So since $L$ is faithful, $J=0$, whence $J^\vee = E/L=0$, so that $E=L$.

Finally, we treat the general case, where $R$ is not necessarily complete. First, note that since $R$ is excellent, $\hat{R}$ is equidimensional with no embedded primes.  Next, note that any $R$-submodule $L$ of $E$ is in fact an $\hat{R}$-submodule, and that $L = \hat{R} \otimes_R L$.  In particular, $\lic{0}{E} = \hat{R} \otimes_R \lic{0}{E}$ (over $R$) $= \lic{0}{\hat{R} \otimes_R E}$ (over $\hat R$, by Proposition~\ref{pr:list} (\ref{it:flat}) applied to the ring homomorphism $R \ra \hat R$) $= \lic{0}{E}$ (over $\hat{R}$) $=E$ (by the first part of the proof).  
\end{proof}

The above result is surprising because this could never happen in a finitely generated module.  We find this sufficiently interesting to merit a concrete example:

\begin{example*}
Let $R := k[\![x,y]\!]$, $k$ a field, and $E$ the injective hull of the residue field.  Let $F := \oplus_{i\geq 1} R t_i$, where the $t_i$ are free generators.  For each $i\geq 1$, let $a_i := x^i t_i$, $b_i := y^i t_i$, $c_i := (xy) t_{i+1} - t_i$, and $d_i := (xy)^i t_{2i} - t_i$.  Let $U$ be the submodule of $F$ generated by the $a_i$s, $b_i$s, and $c_i$s.  Note that $E \cong F/U$ (indeed this is essentially the ``inverse powers'' presentation of Macaulay) and one checks readily that every $d_i \in U$.  For each $i\geq 1$, the following equation holds in the symmetric algebra of $F$ over $R$: \[
t_i^2 + 2 d_i t_i + (d_i^2 - a_{2i} b_{2i}) = 0.
\]
This shows that $t_i$ is integral over $U$ for every $i$.  Hence $U^-_F = F$, so that by definition $\lic{0}{E} = E$.
\end{example*}

The general fact yields the following consequence: 

\begin{thm}\label{thm:torless}
Let $R$ be an excellent, Noetherian, equidimensional local ring of dimension at least $2$ that has no embedded primes.  Let $M$ be an Artinian $R$-module.  Then there exist torsionless\footnote{Recall that a module is \emph{torsionless} if it is a submodule of a free module.} $R$-modules $L \subseteq T$ such that $M \cong T/L$ and $T$ is integral over $L$.  Moreover, if $M$ is finitely generated (and hence, has finite length), $T$ and $L$ may also be chosen to be finitely generated.
\end{thm}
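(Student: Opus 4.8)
The plan is to derive the theorem from Proposition~\ref{pr:ihull} (that $\lic{0}{E}=E$, where $E=E_R(k)$, under hypotheses that are exactly those of the present theorem), the direct-sum property of liftable integral closure, and, for the finite-length refinement, the Reduction Lemma~\ref{lem:fin}. First I would record two elementary preliminaries. By Proposition~\ref{pr:list}(\ref{it:directsum}) and induction on $n$, $\lic{0}{E^n}=E^n$ for every $n\geq 1$. And any Artinian $R$-module $M$ embeds into $E^n$ for $n:=\dim_k\Soc M<\infty$: the socle of an Artinian module is finite-dimensional over $k$ and essential, so $E_R(M)=E_R(\Soc M)\cong E^n$. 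Fix such an embedding $M\subseteq E^n$.

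For the general (Artinian) case I would choose a surjection $\pi\colon F\onto E^n$ from a free $R$-module $F$, and set $L:=\ker\pi$ and $T:=\pi^{-1}(M)$. Then $L\subseteq T\subseteq F$, both $L$ and $T$ are torsionless (being submodules of a free module), and $\pi$ induces an isomorphism $T/L\cong M$. By the definition of liftable integral closure, $\pi(L^-_F)=\lic{0}{E^n}=E^n$. Since $L\subseteq L^-_F$, for any $f\in F$ we may choose $s\in L^-_F$ with $\pi(s)=\pi(f)$, so $f-s\in\ker\pi=L\subseteq L^-_F$ and hence $f\in L^-_F$; thus $L^-_F=F$. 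In particular $T\subseteq F=L^-_F$, i.e.\ $T$ is integral over $L$ in $F$, which proves the first assertion.

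For the refinement, assume $M$ is finitely generated, say $M=Rm_1+\dots+Rm_r$ with $m_j\in E^n$. Write $F=\bigoplus_{\lambda\in\Lambda}Re_\lambda$, and for finite $\sigma\subseteq\Lambda$ put $F_\sigma:=\bigoplus_{\lambda\in\sigma}Re_\lambda$ and $N_\sigma:=\pi(F_\sigma)$. Since $\sum_\lambda\pi(Re_\lambda)=E^n$, Lemma~\ref{lem:fin} (with $L=0$, $M_\lambda=\pi(Re_\lambda)$) gives $\lic{0}{E^n}=\bigcup_\sigma\lic{0}{N_\sigma}$. As each $m_j$ lies in $E^n=\lic{0}{E^n}$, there is a single finite $\sigma$ with $m_1,\dots,m_r\in\lic{0}{N_\sigma}$; since $\lic{0}{N_\sigma}$ is an $R$-submodule of $N_\sigma$, we get $M\subseteq\lic{0}{N_\sigma}\subseteq N_\sigma$. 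Now set $G:=F_\sigma$ (a finite free module), $q:=\pi|_G\colon G\onto N_\sigma$, $L':=\ker q$, and $T':=q^{-1}(M)$. Then $L'$ and $T'$ are finitely generated (submodules of the finitely generated module $G$ over the Noetherian ring $R$) and torsionless, and $T'/L'\cong M$. Finally $q((L')^-_G)=\lic{0}{N_\sigma}\supseteq M$, so for any $t\in T'$ there is $s\in(L')^-_G$ with $q(s)=q(t)$; then $t-s\in\ker q=L'\subseteq(L')^-_G$, hence $t\in(L')^-_G$, i.e.\ $T'\subseteq(L')^-_G$. Thus $T'$ is integral over $L'$, completing the proof.

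Once $\lic{0}{E}=E$ is in hand the argument is essentially formal; the only step that needs a little care is the finite-length reduction, namely choosing one finite $\sigma$ that simultaneously places $M$ inside $N_\sigma$ and inside $\lic{0}{N_\sigma}$, and then verifying that the preimage $q^{-1}(M)$ still sits inside $(\ker q)^-_G$. I do not expect a genuine obstacle beyond this.
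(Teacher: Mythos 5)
Your proof is correct and follows essentially the same route as the paper: both reduce the statement to Proposition~\ref{pr:ihull} via a free presentation $\pi\colon F\onto E^{\oplus n}$ (deducing $(\ker\pi)^-_F=F$), and both invoke Lemma~\ref{lem:fin} to descend to a finite free direct summand in the finitely generated case. The only immaterial differences are that you apply the reduction lemma to the inclusion $0\subseteq E^{\oplus n}$ and take $T'=q^{-1}(M)$, whereas the paper applies it to $\ker\pi\subseteq F$ and takes $T=(K\cap G)+U'$.
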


\begin{proof}
We do the general case first.  Since $M$ is Artinian, there is some positive integer $n$ such that $j: M \hookrightarrow E^{\oplus n}$, where $E$ is the injective hull of the residue field of $R$.  We consider this injection to be an inclusion of modules.  Let $\pi: F \onto E^{\oplus n}$ be a surjection from a free module, $U := \pi^{-1}(M)$, and $K := \ker \pi$.  Then since $\lic{K}{F} = F$ (by Proposition~\ref{pr:ihull}), we have that $U$ is also integral over $K$, and $U/K \cong M$.

If $M$ is finitely generated then we can pick elements $\vect u t \in U$ that generate $U$ modulo $K$.  Let $U' := \sum_{i=1}^t R u_i$.  Fix a basis of $F$ as a free module over $R$, and let $\FF$ denote the set of finitely generated free submodules $G$ of $F$ generated by parts of this basis such that $U' \subseteq G$.  Note that $\FF$ is a direct limit system under inclusion, whose directed union equals $F$.  Also, $M \subseteq \pi(G)$ for any $G \in \FF$, and $U' \subseteq F = \lic{K}{F}$.  Then by Lemma~\ref{lem:fin}, there is some $G \in \FF$ such that $U' \subseteq \lic{(K \cap G)}{G}$.  Let $L := K \cap G$ and $T := L + U'$.  Then $T$ is integral over $L$ in the finite free module $G$, and $T/L \cong M$.
\end{proof}

Here is a global version:

\begin{thm}
Let $R$ be a Noetherian ring.  Let $M$ be a finite $R$-module, and let $\cA$ be the set of minimal primes $\p$ of $M$ such that $\hgt \p \geq 2$ and $R_\p$ is excellent and equidimensional with no embedded primes.  Assuming $\cA \neq \emptyset$, there exist finite torsionless $R$-modules $L \subseteq T$ such that $M \cong T/L$ and such that for all $\p \in \cA$, $T_\p$ is integral over $L_\p$ (as $R_\p$-modules).
\end{thm}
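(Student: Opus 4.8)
The plan is to run the proof of Theorem~\ref{thm:torless} globally, using the injective hull of $M$ over $R$ in place of the single module $E$, and passing to the localizations $R_\p$ ($\p \in \cA$) only at the very end. The point is that although no single finite $R$-module can play the role of $E$ (by the Nakayama property, Proposition~\ref{pr:list}(\ref{it:Nak})), the non-finite module $E_R(M)$ localizes, at each minimal prime of $M$, to something to which Proposition~\ref{pr:ihull} applies.

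Two preliminaries. First, $\cA$ is finite, being contained in the (finite) set of minimal primes of the Noetherian ring $R/\ann M$. Second, set $N := E_R(M)$, the injective hull of $M$ as an $R$-module, so $M \into N$. For $\p \in \cA$ the prime $\p$ is minimal in $\Supp M$, so $M_\p$ has finite length over $R_\p$; writing $N \cong \bigoplus_{\q \in \Ass M} E_R(R/\q)^{(\mu_\q)}$ with each $\mu_\q$ finite and localizing at $\p$, every summand with $\q \not\subseteq \p$ vanishes, while minimality of $\p$ over $\ann M$ forces the remaining $\q$ to equal $\p$. Hence $N_\p \cong E_{R_\p}(\kappa(\p))^{\oplus \mu_\p}$ for some integer $\mu_\p < \infty$, and $M_\p \subseteq N_\p$ by exactness of localization.

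Now the construction. For $\p \in \cA$ the local ring $R_\p$ is, by definition of $\cA$, excellent, equidimensional, without embedded primes, and of dimension $\hgt\p \geq 2$, so Proposition~\ref{pr:ihull} gives $\lic{0}{E_{R_\p}(\kappa(\p))} = E_{R_\p}(\kappa(\p))$, and Proposition~\ref{pr:list}(\ref{it:directsum}) applied to the \emph{finite} direct sum yields $\lic{0}{N_\p} = N_\p$; in particular $M_\p \subseteq \lic{0}{N_\p}$. Choose a surjection $\pi\colon F \onto N$ from a free $R$-module (necessarily of infinite rank), set $K := \ker\pi$ and $U := \pi^{-1}(M)$, so $U/K \cong M$ and, under the identification $N \cong F/K$, we have $M = U/K \subseteq F/K = N$. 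Localizing at $\p \in \cA$ and using that $F_\p$ is free (so $\lic{K_\p}{F_\p} = (K_{\p})^{-}_{F_{\p}}$ and, by the very definition of liftable integral closure, $\lic{0}{N_\p} = \lic{K_\p}{F_\p}/K_\p$ inside $N_\p = F_\p/K_\p$), the inclusion $M_\p \subseteq \lic{0}{N_\p}$ becomes $U_\p \subseteq \lic{K_\p}{F_\p}$. Since $M$ is finite, pick $u_1,\dots,u_t \in U$ whose images generate $U/K$ and set $U' := \sum_{i=1}^t R u_i$, so $U' + K = U$. Fix a free basis of $F$; for each $\p \in \cA$, express $F_\p$ as the directed union of the localizations of the finite free submodules of $F$ spanned by finite subsets of the basis, and apply the Reduction Lemma~\ref{lem:fin} over $R_\p$ to the inclusion $U'_\p \subseteq \lic{K_\p}{F_\p}$ to produce a finite free submodule $G^{(\p)} \subseteq F$ with $U' \subseteq G^{(\p)}$ and $U'_\p \subseteq \lic{(K \cap G^{(\p)})_{\p}}{(G^{(\p)})_{\p}}$. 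Let $G := \sum_{\p \in \cA} G^{(\p)}$, again a finite free submodule of $F$ containing $U'$; by Proposition~\ref{pr:list}(\ref{it:submods}) (monotonicity in both arguments) we retain $U'_\p \subseteq \lic{(K\cap G)_{\p}}{G_{\p}}$ for every $\p \in \cA$. Finally put $L := K \cap G$ and $T := L + U'$. Then $L \subseteq T \subseteq G$ are finite torsionless $R$-modules; $T/L \cong U'/(U' \cap K) \cong (U'+K)/K = U/K \cong M$; and for each $\p \in \cA$, since $L_\p \subseteq (L_{\p})^{-}_{G_{\p}}$ and $U'_\p \subseteq \lic{L_\p}{G_\p} = (L_{\p})^{-}_{G_{\p}}$, we get $T_\p = L_\p + U'_\p \subseteq (L_{\p})^{-}_{G_{\p}}$, i.e.\ $T_\p$ is integral over $L_\p$, as desired.

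The only genuinely new ingredient beyond Theorem~\ref{thm:torless}, and hence the crux of the argument, is the identification of $E_R(M)$ as the correct global substitute for $E$ together with the observation that its localization at each $\p \in \cA$ is a \emph{finite} direct sum of copies of $E_{R_\p}(\kappa(\p))$ — this finiteness is precisely what allows Proposition~\ref{pr:ihull} and Proposition~\ref{pr:list}(\ref{it:directsum}) to be combined. Everything afterward is a localized rerun of the finite-free-module cut-down in the proof of Theorem~\ref{thm:torless}; the one mild subtlety is that the Reduction Lemma must be invoked once per prime of $\cA$ and the finitely many resulting finite free modules amalgamated, which works because the relevant direct-limit presentations of $F$ and $K$ are compatible with localization and because $\lic{}{}$ is monotone in both arguments.
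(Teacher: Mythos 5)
Your proof is correct and follows essentially the same route as the paper's: replace $E^{\oplus n}$ by the injective hull $E_R(M)$, observe that its localization at each $\p\in\cA$ is a finite direct sum of copies of $E_{R_\p}(\kappa(\p))$ so that Proposition~\ref{pr:ihull} applies, and then run the finite-free cut-down from Theorem~\ref{thm:torless} once per prime in $\cA$, amalgamating the resulting finite free submodules. Your write-up merely makes explicit a couple of points the paper leaves implicit (finiteness of $\cA$ and of the relevant Bass numbers), so there is nothing substantive to add.
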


\begin{proof}
We replace the $E^{\oplus n}$ in the proof of Theorem~\ref{thm:torless} with the injective hull of $M$, so that we get a (canonical) injective map $j: M \hookrightarrow E := E_R(M)$.  But since $R$ is Noetherian and $M$ finitely generated, we have $E = \bigoplus_{\p \in \Supp M} E_R(R/\p)^{\oplus \mu(\p,M)}$, where $\mu(\p,M)$ is the 0th Bass number of $M$ with respect to $\p$, and is always a nonnegative integer.  Also, this works well with localization, so that for any minimal prime $\p$ of $M$, we have $E(M)_\p = E_{R_\p}(\kappa(\p))^{\oplus \mu(\p,M)}$, where $\kappa(\p) = R_\p / \p R_\p$ is the residue field of $\p$.  Thus, whenever we localize at any $\p \in \cA$, we are in the situation of Theorem~\ref{thm:torless}.  Now pick a free module surjection $\pi: F \onto E$, $U := \pi^{-1}(M)$, and $K := \ker \pi$.

Now pick elements $\vect u t \in U$ that generate $U$ modulo $K$, and let $U' := \sum_{i=1}^t R u_i$.  Let $\FF$ be as in the proof of Theorem~\ref{thm:torless}.  For each $\p\in \cA$, $(U+K)_\p$ is integral over $K_\p$ in $F_\p$.  Thus, for \emph{each} such $\p$, there is some $G\in \FF$ such that $U'_\p \subseteq \lic{((K \cap G)_\p)}{G_\p}$ as $R_\p$-modules.  Let $H$ be the sum of all such $G$'s.  By construction, $H$ is also finitely generated and free, and we have $U'_\p \subseteq \lic{((K \cap H)_\p)}{H_\p}$ as $R_\p$-modules, for \emph{all} $\p \in \cA$.  Now let $L := K \cap H$ and $T := L + U'$, and the inclusion $L \subseteq T$ (as submodules of $H$) satisfies the conclusion of the theorem.
\end{proof}

\section*{Acknowledgment}
The authors are grateful to Balakrishnan R for finding an error in an earlier draft of this work.

\providecommand{\bysame}{\leavevmode\hbox to3em{\hrulefill}\thinspace}
\providecommand{\MR}{\relax\ifhmode\unskip\space\fi MR }
\providecommand{\MRhref}[2]{%
  \href{http://www.ams.org/mathscinet-getitem?mr=#1}{#2}
}
\providecommand{\href}[2]{#2}

\end{document}